\newenvironment{theorem}[2][Theorem]{\begin{trivlist}
\item[\hskip \labelsep {\bfseries #1}\hskip \labelsep {\bfseries #2}]}{\end{trivlist}}
\newenvironment{example}[2][Example]{\begin{trivlist}
\item[\hskip \labelsep {\bfseries #1}\hskip \labelsep {\bfseries #2}]}{\end{trivlist}}
\title[Invariant Subspaces as Kernels]{Constructing Invariant Subspaces\\ as Kernels of Commuting Matrices}
\author{Carl C. Cowen}
\address{Department of Mathematical Sciences\\
Indiana University-Purdue University Indianapolis, Indianapolis, IN 46202, USA}
\email{ccowen@iupui.edu}%
\author{William Johnston}%
\address{Department of Mathematics, Statistics, and Actuarial Science\\
 Butler University, Indianapolis, IN 46208, USA}
\email{bwjohnst@butler.edu}%
\author{Rebecca G. Wahl}%
\address{Department of Mathematics, Statistics, and Actuarial Science\\
 Butler University, Indianapolis, IN 46208, USA}%
\email{rwahl@butler.edu}%
\subjclass{Primary: 15A21, Secondary: 15A27, 15A99.} \keywords{invariant subspace, kernel, commuting matrices, Jordan canonical form}
\date{June 7, 2019 }
\thanks{The first author was funded by Simons Foundation Collaboration Grant 358080.\ The authors thank the reviewer for helpful suggestions that improved the exposition.\\ {This is the 2nd author's manuscript of the article published in final edited form as Cowen, C. C., Johnston, W., $\&$ Wahl, R. G. (2019). Constructing invariant subspaces as kernels of commuting matrices. {\it Linear Algebra and Its Applications}, 583, 46--62. https://doi.org/10.1016/j.laa.2019.08.014}}
\newtheorem{thm}{Theorem}
\newtheorem{lemma}[thm]{Lemma}
 \newtheorem{cor}[thm]{Corollary}
 \newcommand{\dfn}{\noindent {\bf Definition.\ } }
   \newcommand{\by}{\times}
   \newcommand{\Co}{\ensuremath{\mathbb{C}}} 
    \newcommand{\Cn}{\ensuremath{\mathbb{C}^n}}
   \renewcommand{\L}{\ensuremath{\mathcal{L}}}
   \newcommand{\M}{\ensuremath{\mathcal{M}}}
   \newcommand{\N}{\ensuremath{\mathcal{N}}}
    \newfont{\caps}{cmcsc10}  
  \newfont{\jour}{cmti10}  
\begin{document} \vspace*{-2ex}
  \maketitle
\vspace{-6ex}
\begin{abstract} \noindent {Given an $n \times n$ matrix $A$ over $\mathbb{C}$ and an invariant subspace \N, a straightforward formula constructs an $n \times n$ matrix $N$ that commutes with $A$ and has $\N=\ker N$.  For $Q$ a matrix putting $A$ into Jordan canonical form, $J=Q^{-1}AQ$, we get $\N=Q^{-1}\M$ where $\M=$ ker$(M)$ is an invariant subspace for $J$ with $M$ commuting with $J$. In the formula $M=PZT^{-1}P^t$, the matrices $Z$ and $T$ are $m \times m$ and $P$ is an $n\by m$ row selection matrix.  If \N\ is a marked subspace, 
$m=n$ and $Z$ is an $n\by n$ block diagonal matrix,  and if \N\ is not a marked subspace, then $m>n$  and $Z$ is an  $m\by m$ near-diagonal block matrix.
  \ Strikingly, each block of $Z$ is a monomial of a finite-dimensional backward shift.\ Each possible form of $Z$ is easily arranged in a lattice structure isomorphic to and thereby displaying the complete invariant subspace lattice 
  $\L(A)$ for $A$.}
\end{abstract} 

\noindent It is easy to show, for any $n \times n$  matrix $A$, that $\ker N$ is an $A$-invariant subspace when $N$ commutes with $A$.\ In 1971 Paul Halmos~\cite{Halmos} proved the impressive converse:\ {\it every} invariant subspace of a given $n \times n$ matrix $A$ over $\mathbb{C}$  can be expressed as the kernel of a matrix $N$ in the commutant, $ \{A\}^\prime$,  of $A$.\ An  elegant proof by Ignat Domanov \cite{Domanov} in 2010 followed an earlier simplification by Abdelkhalek Faouzi \cite{Faouzi}.  Related questions on invariant subspaces over arbitrary fields are in \cite{Jones}, \cite{Fripertinger}, and \cite{Added}.\ This paper builds on Halmos' result for matrices over $\mathbb{C}$ and proves, via construction, Theorems A and B stated below. The construction focuses on whether or not the invariant subspace \N\ is ``marked'' -- when there is a Jordan basis for $A$ acting on \N\ that can be extended (by adjoining new vectors) to form a Jordan basis for $A$ on the entire space $\mathbb{C}^n$. The authors of \cite{Rodman}\  show that  \emph{Every invariant subspace for $A$ is marked if and only if for every eigenvalue $\lambda$, the difference between the largest and smallest multiplicity of $\lambda$ as an eigenvalue is no more than $1$}.  The construction for marked subspaces will be fairly easy.\ When \N\ is not marked, the matrix will be strategically ``expanded'' to act on a larger dimensional vector space. The result from \cite{Rodman} will then guarantee that \N, thought of now as living inside the larger vector space, is marked. The marked construction is then employed on the larger space and subsequently projected back down to the original vector space to form the $n \times n$ matrix $N$. \\[1ex]

\begin{theorem}{\bf A.} {\it For a given $n \times n$ matrix $A$ over $\mathbb{C}$ and an $A$-invariant subspace \N, there exists an $n \times n$ matrix $N=QMQ^{-1}$ over $\mathbb{C}$, where $Q$ puts $A$ into Jordan form $J$ and: 
\begin{enumerate}  [label=\alph*)\ ]
  \item $\N=\ker N$;
\item $N \in \{A\}^\prime$;
\item the corresponding $J$-invariant subspace $\M=Q^{-1}\N$ has $\M=\ker M$;
\item $M \in \{J\}^\prime$; 
\item $M=PZT^{-1}P^t$ where $Z$ and $T$ are $m\by m$ matrices for some $m\geq n$, $P$ is an $n\by m$ matrix and  $P^t$ denotes the transpose of $P$;\newpage 
\item $T$ and $P$ provide a standard change of variables and row selection matrix, respectively;
\item $Z$ is a diagonal or near-diagonal block matrix whose non-zero blocks are each a power of a finite-dimensional backward shift.\ 
\end{enumerate}
Furthermore, the subspace \N \ is marked if and only if the construction produces $Z$ and $T$ that are  $n \times n$ with $Z$ block-diagonal and $P$ the identity. } 
\end{theorem}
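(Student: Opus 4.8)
The plan is to build $M$ (hence $N$) by an explicit construction after two reductions, handling the marked and non-marked cases separately. Since $Q$ conjugates $A$ to $J$, the map $\mathcal M\mapsto Q\mathcal M$ is a lattice isomorphism from $J$-invariant to $A$-invariant subspaces and conjugation by $Q$ carries $\{J\}'$ onto $\{A\}'$; so once $M\in\{J\}'$ with $\ker M=\mathcal M:=Q^{-1}\mathcal N$ has been produced, $N:=QMQ^{-1}$ gives (a)--(d) automatically and it remains only to construct $M$ and read off (e)--(g). For the second reduction, $J$ is block diagonal with blocks of pairwise disjoint spectrum, so every $J$-invariant subspace is the internal direct sum of its intersections with the generalized eigenspaces and $\{J\}'$ splits accordingly; I may therefore work one eigenvalue $\lambda$ at a time and, replacing $J$ by $J-\lambda I$ (unchanged commutant), assume $J=J_{k_1}\oplus\cdots\oplus J_{k_r}$ is nilpotent on $\mathbb{C}^n$.

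In the marked case I would use the definition directly. A Jordan basis for $J|_{\mathcal M}$ extends to a Jordan basis $\{f^{(i)}_\ell\}$ for $J$ on $\mathbb{C}^n$ (same chain lengths $k_i$, since those are an invariant of $J$); $J$-invariance forces the part of each chain lying in $\mathcal M$ to be a bottom segment, so $\mathcal M=\bigoplus_i\operatorname{span}\{f^{(i)}_1,\dots,f^{(i)}_{j_i}\}$ for suitable $0\le j_i\le k_i$. Let $T$ be the change-of-basis matrix taking the standard Jordan basis to $\{f^{(i)}_\ell\}$, the chains ordered to reproduce the same Jordan matrix, so that $T\in\{J\}'$; this is the standard change of variables of (f), with $P=I$ and $m=n$. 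Put $Z=\bigoplus_i J_{k_i}^{\,j_i}$, a block-diagonal matrix each of whose nonzero blocks is a power of a finite-dimensional backward shift and with $\ker Z=T^{-1}\mathcal M$. Then $M:=ZT^{-1}$ lies in $\{J\}'$ and $\ker M=T(\ker Z)=\mathcal M$, which gives (c)--(g) here.

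In the non-marked case I would enlarge the space: for each eigenvalue, lengthen the Jordan chains until the block sizes for that eigenvalue differ by at most $1$, obtaining a nilpotent Jordan matrix $\widetilde J$ on $\mathbb{C}^m$ with $m>n$; let $P^t:\mathbb{C}^n\hookrightarrow\mathbb{C}^m$ be the inclusion sending each original chain onto the bottom segment of the lengthened chain, so $P^tJ=\widetilde JP^t$ and hence $J=P\widetilde JP^t$ (using $PP^t=I_n$), with $P$ the row-selection matrix of (f). Then $P^t\mathcal M$ is $\widetilde J$-invariant, and by the criterion of \cite{Rodman} every $\widetilde J$-invariant subspace is marked, so $P^t\mathcal M$ is marked for $\widetilde J$; applying the marked construction to $(\widetilde J,\,P^t\mathcal M)$ produces $m\times m$ matrices $Z,T$ with $\widetilde M:=ZT^{-1}\in\{\widetilde J\}'$ and $\ker\widetilde M=P^t\mathcal M$, and I would set $M:=PZT^{-1}P^t$. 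The heart of the argument is then to check that $M$ still commutes with $J$ and has kernel exactly $\mathcal M$: the enlargement and the change of variables must be arranged so that $\widetilde M$ leaves $\operatorname{ran}P^t$ invariant; granting that, $J=P\widetilde JP^t$ propagates $\widetilde M\widetilde J=\widetilde J\widetilde M$ to $MJ=JM$, and $\ker M=\mathcal M$ follows because $\widetilde M$ restricted to $\operatorname{ran}P^t$ has kernel $P^t\mathcal M$ and $P^t$ is injective. A final bookkeeping step reads off the shape of $Z$: in general it is only near-diagonal --- still with each nonzero block a power of a backward shift, but no longer block diagonal, because the chains had to be lengthened --- and the several eigenvalues are assembled into a single near-diagonal $Z$, giving (e)--(g).

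For the ``furthermore'' biconditional: the construction runs its marked branch exactly when $\mathcal M$ is marked, and that branch returns $n\times n$ matrices with $Z$ block diagonal and $P=I$; conversely, if the output has $P=I$, $m=n$ and $Z$ block diagonal, unwinding $M=ZT^{-1}$ exhibits $\mathcal M=T(\ker Z)$ as a union of complete Jordan chains of $J$ inside the full Jordan basis $\{Te^{(i)}_\ell\}$, i.e.\ as a marked subspace; and the non-marked branch always forces $m>n$, so the two alternatives are mutually exclusive. I expect the main obstacle to be precisely the middle step of the non-marked case --- arranging the enlargement and the standard change of variables so that commutation with $J$ survives conjugation by the non-invertible $P$ and the kernel is neither enlarged nor diminished --- while the two reductions, the marked construction, and the biconditional are by comparison routine.
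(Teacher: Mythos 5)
Your overall route is the same as the paper's: reduce to $J$ and to one eigenvalue at a time, handle marked subspaces by extending a Jordan basis of $J|_{\mathcal{M}}$ and setting $M=ZT^{-1}$ with $Z$ a block-diagonal sum of powers of the backward shift, and handle non-marked subspaces by lengthening Jordan chains to an $m\times m$ Jordan matrix $\widetilde{J}$ for which the subspace becomes marked, then compressing back with a row-selection matrix $P$. The two reductions, the marked case, and the ``furthermore'' biconditional match the paper's Theorems A1, A3, A4 and are sound.

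The gap is exactly the step you flag and then assume: that the marked construction applied to $(\widetilde{J},\,P^t\mathcal{M})$ yields $\widetilde{M}=ZT^{-1}$ leaving $\operatorname{ran}P^t$ invariant. It does not in general, and without it both conclusions can fail. Compression by $P$ does not carry $\{\widetilde{J}\}'$ into $\{J\}'$: by Lemma 1 the blocks of $\widetilde{M}$ are upper-triangular Toeplitz, and the corner of such a block that survives the deletion of the added rows and columns is generally \emph{not} upper-triangular Toeplitz in the (non-square) sense required once the block sizes change. Likewise $\ker(P\widetilde{M}P^t)$ can be strictly larger than $\mathcal{M}$ if some $\widetilde{M}P^t\vec{v}$ is a nonzero vector supported on the added coordinates, since $P$ then annihilates it. The paper's resolution (Theorem A2) is not to use the block-diagonal $Z$ coming from the marked construction on $\widetilde{J}$, but an amended $Z$: if $p$ rows were added to block $i$ and the chain sits in block $j$, one places $(S^*)^k$ in block $(j,j)$, $(S^*)^p$ in block $(i,i)$, the identity in block $(j,i)$, and $(S^*)^p$ in block $(i,j)$ (padded with zeros when the blocks are not square), precisely so that $ZT^{-1}$ has enough zeros in the to-be-deleted rows and columns that the compression remains in $\{J\}'$ and kills nothing extra. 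This amendment is also the true source of the ``near-diagonal'' shape in part (g); it is not a byproduct of the chain-lengthening itself, as your closing paragraph suggests. So your proposal reproduces the paper's architecture but omits its one genuinely nontrivial ingredient.
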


\begin{theorem}{\bf B.} {\it For a given $n \times n$ matrix $A$ over $\mathbb{C}$, there is a one-to-one correspondence between elements in the lattice of invariant subspaces and elements in a lattice of the matrices $Z$ produced by Theorem A for marked subspaces. This correspondence provides a new ability to construct systematically the full invariant subspace lattice $\L(A)$, as well as the sublattice of hyperinvariant subspaces. }
\end{theorem}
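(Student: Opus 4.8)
\ The plan is to treat the matrix $Z$ produced by Theorem A as a complete invariant of the subspace \N\ and to show that the combinatorics of $Z$---its block sizes together with the backward-shift exponents of its blocks---simultaneously separates invariant subspaces, records inclusions, and computes meets and joins. Since Theorem A already assigns to each invariant subspace a definite $Z=Z_{\N}$, what must be proved is that $\N\mapsto Z_{\N}$ is a bijection onto a describable set $\mathcal{Z}_A$ of matrices, that it is a lattice isomorphism when $\mathcal{Z}_A$ is given the order transported from $\L(A)$, and that the hyperinvariant subspaces correspond to a recognizable sublattice. As a first reduction, $N=QMQ^{-1}$ with $Q$ fixed yields a lattice isomorphism $\L(A)\cong\L(J)$ that is compatible with the construction of Theorem A; and since every $J$-invariant subspace is the direct sum of its intersections with the generalized eigenspaces, $\L(J)\cong\prod_\lambda\L(J_\lambda)$ and the matrix $Z$ splits correspondingly as a direct sum over the eigenvalues. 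It therefore suffices to argue for a single nilpotent Jordan matrix $J_\lambda$ with blocks of sizes $n_1\ge n_2\ge\cdots\ge n_r$.

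Next I would establish the bijection. Injectivity is the easy direction: from $Z_{\N}$ one reads off the Jordan block sizes of $J|_{\N}$, hence the data determining the standard $T$ and $P$ of parts (e)--(f), hence reconstructs $M=PZ_{\N}T^{-1}P^t$ and recovers $\N=Q(\ker M)$. For surjectivity I would run the construction in reverse: describe the admissible $Z$---in the marked case, the block-diagonal matrices each of whose blocks has the shape in part (g), a monomial $c_i S_{n_i}^{\,j_i}$ of the $n_i\times n_i$ backward shift, the exponents $(j_1,\dots,j_r)$ being constrained as the construction forces---and verify that $\ker(PZT^{-1}P^t)$ is then a $J$-invariant subspace which, by the criterion of \cite{Rodman}, is marked and on which Theorem A returns the given $Z$. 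Here the backward-shift structure does the work, since $\ker S_{n_i}^{\,j_i}$ is precisely the length-$j_i$ initial segment of the $i$-th Jordan string.

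Then I would install the lattice. Order $\mathcal{Z}_A$ by declaring $Z\preceq Z'$ when the invariant subspace carried by $Z$ is contained in the one carried by $Z'$, and show this order is computable from block data: within a single Jordan string one has $\ker S^{a}\subseteq\ker S^{b}$ exactly when $a\le b$, and $\ker S^{a}\cap\ker S^{b}=\ker S^{\min(a,b)}$, $\ker S^{a}+\ker S^{b}=\ker S^{\max(a,b)}$. Referring two invariant subspaces to a common Jordan basis, their intersection and sum therefore correspond blockwise to the coordinatewise minimum and maximum of the exponent tuples, so $\N\mapsto Z_{\N}$ preserves and reflects order and is a lattice isomorphism. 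For the hyperinvariant sublattice I would invoke the classical description of the hyperinvariant subspaces of a nilpotent Jordan matrix---those of the form $\sum_k\bigl(\ker J^{p_k}\cap\operatorname{ran}J^{q_k}\bigr)$---translate it into a ``staircase'' constraint on the exponent tuple relative to the sizes $n_i$ (namely $j_i=\max_k\min(p_k,\max(n_i-q_k,0))$), and verify that this constraint cuts out a sublattice of $\mathcal{Z}_A$.

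The step I expect to be the main obstacle is this last compatibility, and precisely the interplay between the standard change of variables $T$ and the operations $\cap$ and $+$: two invariant subspaces need not be adapted to a common Jordan basis---indeed their sum need not even be marked---so one cannot naively compare their exponent tuples on a single basis. The crux is to show that the normalization built into $T$ (and, for non-marked \N, into the expansion and the row-selection matrix $P$) already cancels the choice of basis, so that the transported operations on $\mathcal{Z}_A$ collapse to the blockwise minima and maxima above; in tandem one must pin down the right ambient form of $\mathcal{Z}_A$, absorbing the expanded non-marked $Z$, so that it is genuinely closed under meet and join. A secondary point needing care is the bookkeeping of the scalar coefficients in the monomial blocks, so that the correspondence is one-to-one on the nose and not merely up to combinatorial type.
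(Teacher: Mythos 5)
The paper's proof of Theorem B is deliberately short and constructive rather than lattice-theoretic: begin at the identity matrix (the zero subspace), produce each covering relation by raising the backward-shift exponent of a single diagonal block of $Z$ by one, and splice in expanded $Z$'s over $\hat{J}$ for the non-marked subspaces; nothing like a bijection or a meet/join isomorphism is actually established, only a Hasse-diagram-building recipe. Your proposal is a more ambitious lattice-isomorphism argument and is a reasonable way to try to formalize what the paper asserts, but the step you label ``the easy direction'' breaks down. From $Z_{\N}$ alone one recovers only the chain lengths; the standard $T$ of Theorem A is not determined by those lengths, because $T$ absorbs the continuous parameters that separate the continuum-many subspaces of a fixed combinatorial type (the $A,B,C,\ldots$ of Examples C and D). Consequently $\N \mapsto Z_{\N}$ is many-to-one: in Example D the pencils $\M_{22}$ and $\M_{12}$ each consist of infinitely many distinct invariant subspaces sharing one $Z$. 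The correspondence Theorem B actually intends is between $Z$'s and the vertices of the displayed diagram --- combinatorial types of invariant subspaces --- not individual subspaces.

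The ``secondary point'' you flag at the end is therefore the central one, and the fix you sketch does not work: putting a scalar in front of a block, $c\,(S^*)^j$, does not change $\ker\bigl(c\,(S^*)^j\bigr)=\ker (S^*)^j$, so $Z$ cannot be made to carry the parametric data distinguishing subspaces within a type. You are, however, right to worry about the other obstacle you identify: two invariant subspaces need not share an adapted Jordan basis, and their join need not even be marked, so the coordinatewise min/max of exponent tuples is not obviously well-defined. The paper does not confront this either; its ``raise one exponent per step'' procedure produces covers in the $Z$-lattice, but the claim that the resulting order coincides with inclusion on $\L(A)$ is supported by Example D rather than by proof. Your framework --- eigenvalue decomposition, explicit order on $\mathcal{Z}_A$, translation of the Fillmore--Herrero--Longstaff description of the hyperinvariant sublattice into staircase constraints on exponent tuples --- is a sound skeleton for a rigorous version, provided you replace individual subspaces by combinatorial types throughout and give a genuine argument that meets and joins of types are computed by blockwise min and max after referring both types to a common expansion $\hat{J}$.
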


First put $A$ into Jordan canonical form:\ write $A=QJQ^{-1}$, where $J$ has the block form \small $J= \left[ \begin{smallmatrix} J_1 &  &  \vspace{-5pt}\\   & \ddots &   \\ & & J_u \end{smallmatrix} \right]$\normalsize.\ Here, blocks off the main diagonal are zeros and not displayed. Each diagonal block $J_i$ is associated with an eigenvalue $\lambda_i$ of $A$ and the eigenvalues and blocks are not necessarily distinct. These Jordan blocks are $J_i= \left[ \begin{smallmatrix} \lambda_i & 1  & & \\ & \cdot & \cdot &  \\  & & \cdot &  1 \\ & & & \lambda_i \end{smallmatrix} \right]$\normalsize, where again the entries not displayed are zeros. Since the $A$-invariant subspaces \N\ are related to the $J$-invariant subspaces $\mathcal{M}$ according to $Q\mathcal{M}= \N $, and since $M$ commutes with $J$ when $QMQ^{-1}$ commutes with $A$, we may assume $A$ is in Jordan form $J$.\\[1ex]

\dfn A matrix $(t_{pq})$ is called a Toeplitz matrix if all of its entries satisfy $t_{p,q}=t_{p+1,q+1}$.     For $j\geq k$, a $j\by k$ matrix $U$,
is called an \emph{upper-triangular Toeplitz matrix} if $U$ is a Toeplitz matrix and $u_{p,q}=0$ for $p>q$. 
Similarly, if $j<k$,  the $j\by k$ matrix $U$, is called an \emph{upper-triangular Toeplitz matrix} if $U$ is a Toeplitz matrix and $u_{p,q}=0$ for $q-p<k-j$.  Thus, a non-square upper-triangular Toeplitz matrix has rows of zeros at the bottom if $j>k$ and  columns of zeros at the left if $k>j$. \\ [.65ex]

The matrices in $ \{J\}^\prime$ are known. \vspace{-2.3pt}

\begin{lemma}\label{commcond}   \cite[p. 297]{Gohberg} Let $J=\mbox{diag}[J_1,\ldots , J_u]$ be an $n \times n$ Jordan matrix with Jordan blocks $J_1,\ldots , J_u$ and eigenvalues $\lambda_1,\ldots , \lambda_u$, respectively and where $J_{\alpha}\,(\alpha = 1, \ldots, u)$ is a Jordan block of size $m_{\alpha}\times m_{\alpha}$.  Then an $n \times n$  matrix $M=[M_{\alpha \beta}]_{\alpha, \beta = 1}^u$ (blocked in the same partition as $J$ so that $M_{\alpha \beta}$ is an $m_{\alpha} \times m_{\beta}$ matrix) commutes with $ J $ if and only if $M_{\alpha \beta} = 0$ for $\lambda_\alpha \ne \lambda_\beta$, and $M_{\alpha \beta}$ is an upper-triangular Toeplitz matrix for $\lambda_\alpha = \lambda_\beta$. 
\end{lemma}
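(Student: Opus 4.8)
The plan is to reduce the single operator identity $MJ=JM$ to an independent family of equations, one for each ordered pair of Jordan blocks, and to treat each according to whether its two eigenvalues coincide. Write each block as $J_\alpha = \lambda_\alpha I_{m_\alpha} + S_\alpha$, where $S_\alpha$ is the $m_\alpha\times m_\alpha$ matrix with ones on the first superdiagonal and zeros elsewhere. Partitioning $M=[M_{\alpha\beta}]$ compatibly with $J$, the identity $MJ=JM$ holds if and only if $M_{\alpha\beta}J_\beta = J_\alpha M_{\alpha\beta}$ for all $\alpha,\beta$, which rearranges to
\[
(\lambda_\beta-\lambda_\alpha)\,M_{\alpha\beta} = S_\alpha M_{\alpha\beta} - M_{\alpha\beta} S_\beta .
\]
It therefore suffices to decide, for fixed $\alpha,\beta$, which $m_\alpha\times m_\beta$ matrices $X$ satisfy $(\lambda_\beta-\lambda_\alpha)X = S_\alpha X - X S_\beta$.

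When $\lambda_\alpha\ne\lambda_\beta$, I would show that $X=0$ is the only solution. The map $L\colon X\mapsto S_\alpha X - X S_\beta$ on the space of $m_\alpha\times m_\beta$ matrices is nilpotent, because left multiplication by $S_\alpha$ and right multiplication by $S_\beta$ are commuting nilpotent operators, so that $L^{m_\alpha+m_\beta}=0$. The equation reads $\bigl((\lambda_\beta-\lambda_\alpha)I-L\bigr)X=0$, and $(\lambda_\beta-\lambda_\alpha)I-L$ is invertible since $L$ is nilpotent and $\lambda_\beta-\lambda_\alpha\ne 0$; hence $X=0$. (Equivalently, iterate $X=(\lambda_\beta-\lambda_\alpha)^{-1}L(X)$ to reach $X=(\lambda_\beta-\lambda_\alpha)^{-(m_\alpha+m_\beta)}L^{m_\alpha+m_\beta}(X)=0$.)

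When $\lambda_\alpha=\lambda_\beta$, the equation collapses to $S_\alpha X = X S_\beta$, and I would finish by an entrywise comparison. With $X=(x_{pq})$ one has $(S_\alpha X)_{pq}=x_{p+1,q}$ and $(X S_\beta)_{pq}=x_{p,q-1}$, where the former is read as $0$ when $p=m_\alpha$ and the latter as $0$ when $q=1$; so $S_\alpha X = X S_\beta$ is equivalent to the relations $x_{p+1,q}=x_{p,q-1}$ over all indices in range under those conventions. The relations with $1\le p<m_\alpha$ and $1<q\le m_\beta$ say exactly that $X$ is Toeplitz, $x_{p,q}=x_{p+1,q+1}$; the relations with $q=1$ force $x_{2,1}=\cdots=x_{m_\alpha,1}=0$; and the relations with $p=m_\alpha$ force $x_{m_\alpha,1}=\cdots=x_{m_\alpha,m_\beta-1}=0$. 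Propagating these boundary zeros along the diagonals of the (now Toeplitz) matrix $X$ confines the possibly-nonzero entries to the diagonals $q-p\ge\max(0,\,m_\beta-m_\alpha)$, which is precisely the upper-triangular Toeplitz shape of the definition in each of the three cases $m_\alpha=m_\beta$, $m_\alpha>m_\beta$ (zero rows accumulate at the bottom), and $m_\alpha<m_\beta$ (zero columns accumulate at the left). The reverse inclusion is immediate from the same two entrywise formulas. Running both cases over all $\alpha,\beta$ assembles the stated description of $\{J\}'$.

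I expect the nilpotency observation to be the conceptual heart of the argument, while the step demanding the most care is the diagonal bookkeeping in the equal-eigenvalue case: one must check that propagating the first-column and last-row zeros along diagonals reproduces exactly the non-square upper-triangular Toeplitz pattern — in particular that $\max(0,m_\beta-m_\alpha)$ is the index of the lowest possibly-nonzero diagonal — rather than accidentally forcing further entries to vanish.
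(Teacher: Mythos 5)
The paper does not prove Lemma~1 at all: the statement is accompanied by a citation to Gohberg, Lancaster, and Rodman (\cite[p.~297]{Gohberg}) and is used as a black box, so there is no in-paper argument to compare against. Your proof is correct and is essentially the standard argument one finds in that reference. The block-by-block reduction of $MJ=JM$ to $M_{\alpha\beta}J_\beta = J_\alpha M_{\alpha\beta}$, the rearrangement $(\lambda_\beta-\lambda_\alpha)M_{\alpha\beta}=S_\alpha M_{\alpha\beta}-M_{\alpha\beta}S_\beta$, the observation that $X\mapsto S_\alpha X - X S_\beta$ is nilpotent (being the difference of commuting nilpotents) so that the Sylvester-type equation forces $X=0$ when $\lambda_\alpha\ne\lambda_\beta$, and the entrywise verification that $S_\alpha X = X S_\beta$ is equivalent to the Toeplitz relations together with vanishing of the first column below the top and of the last row before the final entry, are all sound. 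Your diagonal bookkeeping correctly pins the possibly nonzero diagonals at $q-p\ge\max(0,m_\beta-m_\alpha)$, which matches the paper's definition of a non-square upper-triangular Toeplitz matrix in all three size comparisons. The only cosmetic remark is that the sharp nilpotency exponent from the binomial expansion is $m_\alpha+m_\beta-1$; your looser bound $m_\alpha+m_\beta$ is of course also valid and suffices.
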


\begin{example}{A.} $J=$\tiny$ \left[ \begin{smallmatrix} 2 & 1 & 0&| &0 &|&0\\ 0  & 2 & 1 &| & 0&| &0\\ 0 & 0 & 2&| & 0&| &0\\ - & - & -&| & -&| &-\\ 0 & 0 & 0 &|& 2 &|&0 \\ - & - & -&| & -&| &-\\ 0 & 0 & 0 &|& 0 &|&3  \end{smallmatrix} \right]$\normalsize\ has what can obviously be called a 3-1-1 Jordan structure with eigenvalue 2 in the first two Jordan blocks 
and eigenvalue 3 in the third. Lemma 1 then says  $M \in \{J\}^\prime$ iff $M = $\tiny$\left[ \begin{smallmatrix}  a & b & c&| &d &|&0\\ 0  & a & b &| & 0&| &0\\ 0 & 0 & a&| & 0&| &0\\ - & - & -&| & -&| &-\\ 0 & 0 & e &|& f &|&0 \\ - & - & -&| & -&| &-\\ 0 & 0 & 0 &|& 0 &|&g  \end{smallmatrix} \right]$\normalsize\ with entries in $\mathbb{C}$. \hfill $\square$
\end{example}
\vspace*{6pt}

\noindent {\bf Notation.}\ In all that follows, $\vec{e}_{jk}$ will be the ``elementary basis vector'' filled with zeros except with 1 in the $j$th block's $k$th position. For example, for the 3-1-1 matrix $J$ in Example A, they are:\begin{center} \small$\vec{e}_{11}= \left[ \begin{smallmatrix} 1 \\ 0\\0 \\- \\ 0\\-\\ 0  \end{smallmatrix} \right]$, $\vec{e}_{12}= \left[ \begin{smallmatrix} 0 \\ 1\\0 \\- \\ 0\\-\\ 0  \end{smallmatrix} \right]$, $\vec{e}_{13}= \left[ \begin{smallmatrix} 0 \\ 0\\1 \\- \\ 0\\-\\ 0  \end{smallmatrix} \right]$, $\vec{e}_{21}= \left[ \begin{smallmatrix} 0 \\ 0\\0 \\- \\ 1\\-\\ 0  \end{smallmatrix} \right]$, and $\vec{e}_{31}= \left[ \begin{smallmatrix} 0 \\ 0\\0 \\- \\ 0\\-\\ 1  \end{smallmatrix} \right]$.\normalsize  \end{center}  Note the eigenvectors are $\vec{e}_{11}, \vec{e}_{21}$, and $\vec{e}_{31}$. Such an elementary ``Jordan basis'' is not unique, but the choice of $Q$ in $A=QJQ^{-1}$ produces it.

\vspace*{9pt}

\noindent {\bf $\S$ 1.} \textsc{The Case for \M  \ with Extremely Simple Form }  \\[1ex]

\noindent If $J$ has only one eigenvalue $\lambda$, then $J$ and $J-\lambda I$ have the same invariant subspaces.\ So without loss of generality, assume in this situation that $\lambda=0$ and $J$ is nilpotent. The next theorem and corollary are the versions of Theorem A for extremely simple forms of invariant subspaces \M.\\[1ex]

\begin{thm}\label{single}  If $J$ is a single $n \times n$ Jordan block, then every $J$-invariant subspace \M\ is  $\M=\ker (S^*)^k$, where $(S^*)^k$ is a determined power of the ``finite-dimensional backward shift,'' and $(S^*)^k \in \{ J \} ^\prime$.  
\end{thm}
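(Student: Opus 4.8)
The plan is to reduce to the nilpotent case, observe that a single nilpotent Jordan block \emph{is} the finite-dimensional backward shift written in the elementary Jordan basis, and then classify its invariant subspaces by a short ``leading term'' argument.

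By the reduction already noted, I would take $J$ to be the nilpotent $n\by n$ Jordan block. Writing the elementary Jordan basis as $\vec{e}_1,\dots,\vec{e}_n$ (there is a single block, so I drop the block index), one has $J\vec{e}_1=\vec{0}$ and $J\vec{e}_i=\vec{e}_{i-1}$ for $i\geq 2$; that is, $J$ is exactly the finite-dimensional backward shift $S^*$. Since $J^k\vec{e}_i=\vec{e}_{i-k}$ when $i>k$ and $J^k\vec{e}_i=\vec{0}$ when $i\leq k$, one gets $\ker (S^*)^k=\operatorname{span}\{\vec{e}_1,\dots,\vec{e}_k\}$ for $0\leq k\leq n$, and these subspaces form a strictly increasing chain
\[
\{\vec{0}\}=\ker (S^*)^0\subsetneq\ker (S^*)^1\subsetneq\cdots\subsetneq\ker (S^*)^n=\Cn .
\]
So the theorem reduces to showing that every $J$-invariant subspace is one of the members of this chain, and that the relevant power commutes with $J$.

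For the classification, let \M\ be a nonzero $J$-invariant subspace, and for a nonzero $v=\sum_i c_i\vec{e}_i$ let $\deg(v)$ be the largest index $i$ with $c_i\neq 0$; set $k=\max\{\deg(v):v\in\M,\ v\neq\vec{0}\}$. Every vector of \M\ then lies in $\operatorname{span}\{\vec{e}_1,\dots,\vec{e}_k\}$, which gives one inclusion. For the reverse, pick $w\in\M$ with $\deg(w)=k$. Because $J=S^*$ sends a vector of leading index $j\geq 2$ to one of leading index $j-1$ (with the same, still nonzero, leading coefficient), the $k$ vectors $w,Jw,\dots,J^{k-1}w$ --- all in \M\ by invariance --- have leading indices $k,k-1,\dots,1$; in echelon form with respect to $\vec{e}_1,\dots,\vec{e}_k$ they are linearly independent, hence span $\operatorname{span}\{\vec{e}_1,\dots,\vec{e}_k\}$. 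Therefore $\M=\operatorname{span}\{\vec{e}_1,\dots,\vec{e}_k\}=\ker (S^*)^k$, with $k=\dim\M$ (uniquely determined, since the chain above is strictly increasing). Finally $(S^*)^k=J^k$ is a polynomial in $J$, so it lies in $\{J\}^\prime$; equivalently, $(S^*)^k$ is an upper-triangular Toeplitz matrix and commutes with the single block $J$ by Lemma~\ref{commcond}.

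There is no real obstacle here, as the result is classical; the only points that need care are the claim that $J=S^*$ decrements $\deg$ by exactly one on all of \M\ --- this uses that a single Jordan block is non-derogatory (cyclic) and would fail for a generic nilpotent matrix --- together with the bookkeeping for the degenerate members $k=0$ (where $(S^*)^0=I$ and $\M=\{\vec{0}\}$) and $k=n$ (where $\M=\Cn$).
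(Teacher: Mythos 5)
Your proof is correct and follows essentially the same route as the paper: identify the $J$-invariant subspaces of a single nilpotent block as the initial spans $\operatorname{span}\{\vec{e}_1,\dots,\vec{e}_k\}$ and check that $(S^*)^k$ has exactly that kernel and commutes with $J$. The only difference is that where the paper cites \cite[p.~7]{Gohberg} for the classification of these invariant subspaces, you prove it directly with the leading-index argument (and you note $(S^*)^k=J^k$ is a polynomial in $J$, giving commutativity without invoking Lemma~\ref{commcond}); both additions are sound.
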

\begin{proof} For such $J$, every $J$-invariant subspace \M\ has the form $\M= \mbox{span}\{\vec{e}_1, \ldots ,\vec{e}_k \}$, where $1 \le k \le n$ with eigenvector $\vec{e}_1$ and generalized eigenvectors $\vec{e}_j$, $j=2,3 \ldots n$ (cf. \cite[p. 7]{Gohberg}).  For this \M, construct the upper-triangular Toeplitz matrix $(S^*)^k$ having all zero diagonals except for the $(k+1)$st, which is $1$.\ (The main diagonal has $k=0$ and corresponds to the trivial subspace $\{ \vec{0}\}$.)  In a visual display, 
$(S^*)^k= \left[ \begin{smallmatrix}  0 & \cdots    & 0 & 1  &   \vspace{-5pt}\\ 
 &   &  &   &    &   \ddots    &    \\  
&     &  &    &     &    & 1  \\  
&    &  &  &   &  & 0 \vspace{-5pt} \\ 
 &    &  &  &   &  &  \vdots \\
 &    &  &  &   &  &  0 \end{smallmatrix} \right]$, 
with   blank entries filled with zeros.  Lemma 1 says $(S^*)^k \in\{ J \}^\prime$.\  Clearly $(S^*)^k   \vec{e}_j = \vec{0}$ for $j=1,2,\ldots, k$, and $\mbox{Rank }(S^*)^k =n-k$.\  Hence $\ker (S^*)^k  = \M$. \end{proof}
 
\begin{example}{B.} Let $J= \left[ \begin{smallmatrix} 0 & 1 & 0 \\ 0  & 0 & 1  \\ 0 & 0 & 0 \end{smallmatrix} \right]$.  $Z \in \{J\}^\prime$ iff $Z = \left[ \begin{smallmatrix}  a & b & c \\ 0  & a & b  \\ 0 & 0 & a   \end{smallmatrix} \right]$. The $J$-invariant subspaces are $\M_0=\{ \mathbf{0}\}$, $\M_1=\mbox{span}\{\vec{e}_1\}$, $\M_2=\mbox{span}\{\vec{e}_1, \vec{e}_2 \}$, and $\M_3=\mathbb{C}^3$. Theorem 2 constructs $\M_0=\mbox{ker }(S^*)^0=\ker  \left[ \begin{smallmatrix} 1 & 0 & 0 \\ 0  & 1 & 0  \\ 0 & 0 & 1 \end{smallmatrix} \right]$, $\M_1 = \ker S^*= \ker \left[ \begin{smallmatrix} 0 & 1 & 0 \\ 0  & 0 & 1  \\ 0 & 0 & 0 \end{smallmatrix} \right]$, $\M_2 = \ker (S^*)^2=\ker  \left[ \begin{smallmatrix} 0 & 0 & 1 \\ 0  & 0 & 0  \\ 0 & 0 & 0 \end{smallmatrix} \right]$, and $\M_3 = \ker (S^*)^3=\ker  \left[ \begin{smallmatrix} 0 & 0 & 0 \\ 0  & 0 & 0  \\ 0 & 0 & 0 \end{smallmatrix} \right]$. \hfill $\square$
\end{example}

The following corollary deals with the case in which the invariant subspace \M\ is a direct sum of subspaces covered by Theorem~\ref{single}. 
\begin{cor}\label{dsumsingle} 
If a nilpotent matrix $J$ has $p$ blocks indexed $1\le i \le p$, each with elementary basis eigenvector $\vec{e}_{i1}$ and size $n_i$, then a $J$-invariant subspace of the form \begin{center} $\M= \bigoplus\limits_{i=1}^p \mbox{span}\{ { \vec{e}_{i1},\ldots , \vec{e}_{ik_i} }\}$, where $k_i \le n_i$ \end{center}
can be realized as $\M=\ker Z$, where $Z$ is the diagonal block matrix of the same block structure as $J$ and whose $i$th diagonal block is $(S^*)^{k_i}$.  Furthermore, $Z\in \{ J \} ^\prime$ because it is of the matrix format of Lemma 1. 
\end{cor}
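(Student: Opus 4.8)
The plan is to reduce the whole statement to Theorem~\ref{single} applied one block at a time, exploiting the fact that $J$ and $Z$ are block diagonal with respect to the \emph{same} partition $\mathbb{C}^n=\bigoplus_{i=1}^p\mathbb{C}^{n_i}$. First I would record the elementary observation that, because $Z$ is block diagonal, a vector written in this decomposition as $\vec{v}=(\vec{v}_1,\ldots,\vec{v}_p)$ lies in $\ker Z$ if and only if $(S^*)^{k_i}\vec{v}_i=\vec{0}$ for every $i$. Hence $\ker Z=\bigoplus_{i=1}^p\ker(S^*)^{k_i}$, where the $i$th summand is understood to sit inside the $i$th coordinate block $\mathbb{C}^{n_i}$.

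Next I would invoke Theorem~\ref{single} block by block. Restricted to the $i$th coordinate block, $J$ is a single $n_i\times n_i$ nilpotent Jordan block, and the theorem says precisely that $\ker(S^*)^{k_i}=\mathrm{span}\{\vec{e}_{i1},\ldots,\vec{e}_{ik_i}\}$, which is exactly the $i$th summand of $\M$. Summing over $i$ gives $\ker Z=\bigoplus_{i=1}^p\mathrm{span}\{\vec{e}_{i1},\ldots,\vec{e}_{ik_i}\}=\M$, which is the first assertion.

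For the commutation claim I would appeal directly to Lemma~\ref{commcond}. Since $J$ is nilpotent, all of the eigenvalues coincide ($\lambda_\alpha=\lambda_\beta=0$), so the lemma's only requirement is that every block $Z_{\alpha\beta}$ be an upper-triangular Toeplitz matrix (in the possibly non-square sense of the Definition). The diagonal blocks $Z_{ii}=(S^*)^{k_i}$ are square upper-triangular Toeplitz matrices by construction, and the off-diagonal blocks $Z_{\alpha\beta}$ with $\alpha\ne\beta$ are zero matrices, which vacuously satisfy every upper-triangular Toeplitz condition. Thus $Z$ has exactly the format described in Lemma~\ref{commcond}, and therefore $Z\in\{J\}^\prime$.

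This is really just bookkeeping, so I do not expect a genuine obstacle; the only place calling for a moment's attention is keeping the block coordinates straight when passing from the single-block statement of Theorem~\ref{single} to the direct sum, and confirming that the zero off-diagonal blocks meet the non-square upper-triangular Toeplitz convention — both of which are immediate.
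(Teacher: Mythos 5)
Your proposal is correct and follows exactly the route of the paper, which proves the corollary by applying Theorem~\ref{single} to each direct sum component and noting that the block-diagonal form of $Z$ matches the format of Lemma~\ref{commcond}; you have merely spelled out the bookkeeping that the paper leaves implicit.
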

\begin{proof} Apply Theorem~\ref{single} to each of the direct sum components. The result follows.
\end{proof}

\noindent {\bf $\S$ 2.} \textsc{The Case Where All Jordan Blocks for $J$ Have the Same Eigenvalue} \\[1ex]

\noindent Again without loss of generality $J$ is nilpotent and has $p$ blocks indexed $1\le i \le p$, each with elementary basis eigenvector $\vec{e}_{i1}$ and size $n_i$. It is convenient to organize the Jordan blocks from largest to smallest moving from left to right across the matrix $J$.  Consider any ``irreducible'' invariant subspace (one that cannot be decomposed into a direct sum of multiple invariant subspaces); it has form  $\M= \{ \vec{v}, J\vec{v}, J^2\vec{v}, \ldots , J^{k-1}\vec{v},\vec{0} \}$, so that $J^{k-1}\vec{v}$ is an eigenvector.  {  An important distinction is that either \M\ is marked or it is not (again, see \cite[p. 210]{Rodman}).    For any marked subspace,} form the ``change of basis (transformation) matrix'' $T$, blocked the same way as $J$, in the following way:\vspace{-3pt}

\begin{itemize}[itemsep=0mm, topsep=1ex]
\item Write $J^{k-1}\vec{v}=\sum a_i \vec{e}_{i1}$ as a linear combination (with nonzero coefficients) of elementary basis eigenvectors, then identify the rightmost Jordan block of $J$ that has its eigenvector in this linear combination. (The irreducible \M\ will have chain length no more than the dimension of this block.) 
\item For that (say it is the $j$th) diagonal block's corresponding columns of $T$,   let the first $k$  columns be $J^{k-1}\vec{v}, \ldots, J^2\vec{v}, J\vec{v}, \vec{v}$.\ Fill out any remaining columns of that block of $T$ with columns that extend any of those $k$ column's nonzero entries in an upper-triangular Toeplitz manner and have zeros elsewhere.  
\item Set the remaining blocks of $T$ equal to zero but put the identity on all other diagonal blocks.  
\end{itemize}
Then construct $Z$ as a block diagonal matrix with the identity in each diagonal block except for the $j^{th}$ block, which is $(S^*)^k$.\ This construction of $T$ and $Z$ produces the following version of Theorem A for this scenario:

\begin{theorem}{A1.}
{\it Given an irreducible marked invariant subspace \M\ for a Jordan matrix $J$ with a single eigenvalue, the construction of the $n \times n$ matrices $T$ and $Z$ described immediately above produces \begin{center}$\boxed{\, M=ZT^{-1} \mbox{ with } \ \M\ = \ker M\, }$\end{center}}
\end{theorem}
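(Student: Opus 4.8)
The plan is to read the boxed identity directly off the explicit shapes of $Z$ and $T$, reducing it to a kernel computation transported by an invertible change of basis. Since $M = ZT^{-1}$, as soon as $T$ is known to be invertible a vector $\vec x$ lies in $\ker M$ exactly when $T^{-1}\vec x \in \ker Z$; that is, $\ker M = T(\ker Z)$. So the statement breaks into two pieces: (i) $T$ is invertible, and (ii) $T(\ker Z) = \M$. I would dispatch (ii) first, then (i), and remark that together they also give $M \in \{J\}'$, the form of Theorem A relevant to this case.

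For (ii), first compute $\ker Z$. By construction $Z$ is block diagonal, the identity on every block except the $j$th, which is the $n_j \times n_j$ power $(S^*)^k$ of the finite-dimensional backward shift, with $k \le n_j$ (the size of the rightmost block whose eigenvector occurs in $J^{k-1}\vec v$, as noted in the construction). Theorem~\ref{single} applied to that block, together with Corollary~\ref{dsumsingle} for the full block-diagonal $Z$ (the identity being $(S^*)^0$), gives $\ker Z = \mbox{span}\{\vec e_{j1}, \dots, \vec e_{jk}\}$. On the other hand, by the definition of $T$ its $j$th block has $J^{k-1}\vec v, J^{k-2}\vec v, \dots, J\vec v, \vec v$ as its first $k$ columns, i.e. $T\vec e_{j\ell} = J^{k-\ell}\vec v$ for $1 \le \ell \le k$. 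Hence $T(\ker Z) = \mbox{span}\{T\vec e_{j1}, \dots, T\vec e_{jk}\} = \mbox{span}\{J^{k-1}\vec v, \dots, J\vec v, \vec v\} = \M$. The orientation of $(S^*)^k$ is essential here: its kernel is precisely the first $k$ elementary basis vectors of block $j$, which $T$ carries onto a spanning set of $\M$.

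Piece (i) is where the hypothesis that $\M$ is \emph{marked} is used, and I expect it to be the only genuine obstacle. Markedness says the Jordan chain $\vec v, J\vec v, \dots, J^{k-1}\vec v$ of $J$ on $\M$ can be completed to a Jordan basis of $\mathbb{C}^n$; the prescription for $T$ is exactly such a completion --- one that inserts the (Toeplitz-padded) extended chain into the columns of block $j$ and leaves the standard chains in every other block untouched. Once that prescription is known to be legitimate, the columns of $T$ form a Jordan basis of $J$ with the block sizes in the same positions as in $J$, so $T$ is invertible and in fact $T^{-1}JT = J$; hence $T \in \{J\}'$, and since also $Z \in \{J\}'$ (it has the format of Lemma~\ref{commcond}) and therefore $T^{-1} \in \{J\}'$, one obtains $M = ZT^{-1} \in \{J\}'$. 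The technical heart is checking ``legitimate'': one must verify that the upper-triangular Toeplitz extension columns in block $j$ are well defined --- in particular that they do not overflow a smaller block --- and genuinely prolong the chain, so that the columns of $T$ are linearly independent and put $J$ back into its original Jordan form. The criterion of \cite{Rodman} for markedness is precisely what guarantees such a completion exists, and granting it, steps (i) and (ii) together yield $\ker M = \M$ and close the proof.
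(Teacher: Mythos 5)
Your proposal is correct and follows essentially the same route as the paper: identify $\ker Z$ via Theorem~\ref{single} and Corollary~\ref{dsumsingle} as the span of $\vec e_{j1},\dots,\vec e_{jk}$, observe that the invertible change of basis $T$ carries this onto the chain spanning $\M$ (the paper phrases it as $\M = T\,\M_T = T\ker Z = \ker(ZT^{-1})$), and use markedness to justify that the Toeplitz-padded completion of the chain yields an invertible $T$ lying in $\{J\}'$.
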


\begin{proof} $T$ has the following properties: 
\begin{itemize}[itemsep=0mm, topsep=1ex]
  \item $T$ sends  {  $\vec{e}_{j1}, \ldots , \vec{e}_{jn_j}$ to the vectors that form its columns running through the $j$th diagonal block.  In particular, it sends  each of } $\vec{e}_{j1}, \ldots , \vec{e}_{j(k-1)} , \vec{e}_{jk}$ to  $J^{k-1}\vec{v}, \ldots, J^2\vec{v}, J\vec{v}, \vec{v}$, respectively. For any other elementary basis vector $\vec{e}$ not already discussed in this bullet, $T\vec{e} = \vec{e}$.   
\item $T$ is invertible, since (see \cite[p.183]{Axler}) the columns of $T$ form a basis for 
$\mathbb{C}^n$ and so are linearly independent.
\item $T^{-1}$ forms a new coordinate system.  The only changes to the elementary basis vectors are to   $\vec{e}_{j1}, \ldots , \vec{e}_{jn_j}$, which are transformed to the new Jordan chain coordinate basis vectors running through the $j$th diagonal block (and include $J^{k-1}\vec{v}, \ldots, J^2\vec{v}, J\vec{v}, \vec{v}$).
\item $T$ is in $\{J\}^\prime$, since it is in the form described in Lemma 1 -- the nature of the Jordan chain structure produces an upper-triangular Toeplitz system in each block.   In fact, an easy way to check that \M\ is   marked\footnote{See \cite[ p. 84, Theorem 2.9.1]{Gohberg} for an equivalent procedure in this situation.} is that $T$ is in $\{J\}^\prime$. Any instance where this does not happen is remedied in the non-marked case described below.   
\end{itemize} 
\noindent The new coordinate system makes the subspace \M$_T=T^{-1}$\M\ of the simple type in Corollary 3, which then constructs $Z$ as described for this scenario.  By Corollary 3, \M$_T=\ker Z$, which gives the desired representation of Theorem A with $P$ the identity: \begin{center}\M\ $=T$\M$_T=T\ker Z = \ker (ZT^{-1})$.\end{center}
\vspace*{-24pt}
\end{proof}

For the situation in Theorem A1, note that each different $Z$ can be paired with each \M, up to the number of elementary basis vectors forming the chain basis for \M, since $Z$ is constructed exactly from that number.  This observation will provide a natural way to categorize different invariant subspaces; we say that two invariant subspaces are of the same type when their chain basis vectors from each Jordan block components have the same length.  Each different subspace of a given type is described simply from the coefficients in the linear combination of elementary basis vectors. Section 3 will use this fact to construct invariant subspace lattices.\\[1ex]

{  The situation is more difficult when \M\ is not marked; the construction is similar but requires amendment. Because, for example, any ${J}$-invariant subspace must be marked when ${J}$ has Jordan blocks whose sizes differ by at most one (cf. \cite[p. 211]{Rodman}), we expand the matrix $J$ -- we choose to expand individual Jordan blocks until the formation of $T$ will satisfy the properties listed above. This expanding of $J$ to a new nilpotent Jordan block matrix $\hat{J}$ for which \M\ will be marked can always be performed by bringing block sizes toward equality. In the expanded space, formulate $Z T^{-1}$, with $T$ constructed for $\hat{J}$ as in Theorem A1, but use the following amended construction for $Z$.\\[1ex]

\noindent Suppose the expanding of $J$ to $\hat{J}$ has added $p$ rows and columns to the $i$th diagonal block of $J$.  Then:
\begin{itemize}[itemsep=0mm, topsep=1ex]
  \item Make each diagonal block of $Z$ the identity with two exceptions: make the $j$th diagonal block $(S^*)^k$, and make the $i$th diagonal block $(S^*)^{p}$.
  \item Make each off-diagonal block zero, with two exceptions: make the $Z_{j{i}}$ block the identity (if the block is not square, then add zeros on the left  or below as needed to fill out the block), and make the $Z_{{i}j}$ block  $(S^*)^{p}$ (again with zeros added on the left  or below if the block is not square).
\end{itemize} 
\noindent Note $ZT^{-1}$ is an $(n+p) \times (n+p)$ matrix in this scenario. To shrink the construction back down to size $n \times n$, use an $n \times (n+p)$ row selection matrix  $P$ that has all zero entries except for a single 1 on each row and located on increasing numbered columns that  correspond to an original row of $J$. The effect of simultaneous multiplication on the left by $P$ and on the right by $P^t$ is simply to remove each of the rows and columns that were added in the expanding. This construction of $T$, $Z$, and $P$ produces the following version of Theorem A for this scenario:

\begin{theorem}{A2.}
{\it Given an irreducible nonmarked invariant subspace \M\ for a Jordan matrix $J$ with a single eigenvalue, the construction of the $(n+p) \times (n+p)$ matrices $T$ and $Z$, and the $n \times (n+p)$ matrix $P$ described immediately above produces the $n \times n$ matrix $M$ so that \begin{center}$\boxed{\, M=PZT^{-1}P^t \mbox{ with } \ \M\ = \ker M \, }$\end{center}}
\end{theorem}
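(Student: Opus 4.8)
The plan is to deduce Theorem A2 from Theorem A1 by working in the expanded space and then controlling precisely what the compression by $P$ does to the kernel. Write $\hat{J}$ for the expanded nilpotent Jordan matrix obtained from $J$ by adjoining $p$ rows and columns to the $i$th block. The first thing to pin down is that the expansion can be chosen so that $\M$, now regarded as a subspace of $\mathbb{C}^{n+p}$, is both $\hat{J}$-invariant and marked: invariance holds because $\hat{J}$ restricted to the span of the original basis vectors is literally $J$ (the new columns of the $i$th block only feed further down the $i$th chain), so $\vec{v},J\vec{v},\ldots,J^{k-1}\vec{v}$ is still an irreducible Jordan basis for $\hat{J}|_{\M}$; and markedness is arranged by pushing the block sizes toward equality and invoking the criterion of \cite{Rodman} (p.\ 211) that, for a single eigenvalue, when the largest and smallest Jordan block sizes differ by at most $1$ every invariant subspace is marked. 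Granting this, Theorem A1 applies verbatim to the pair $(\hat{J},\M)$ and produces the $(n+p)\times(n+p)$ change-of-variables matrix $\hat{T}$ — this is the $T$ of the statement — together with the block-diagonal matrix $Z_0$ that is the identity on every diagonal block except the block $j$ carrying $\vec{v}$, where it is $(S^*)^k$; and $\M=\ker\big(Z_0\hat{T}^{-1}\big)$ holds inside $\mathbb{C}^{n+p}$ by Theorem A1 and Corollary~\ref{dsumsingle}.

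The real content is to show that replacing $Z_0$ by the amended matrix $Z$ described just before the statement, and then compressing by $P$ on the left and $P^{t}$ on the right, still returns $\M$ as the kernel. Along the way one checks, using the non-square case of the Definition, that the amended $Z$ still lies in $\{\hat{J}\}^\prime$ (this is the point of the ``zeros on the left or below'' conventions), so that $Z\hat{T}^{-1}\in\{\hat{J}\}^\prime$, which is what will be needed for the commutation assertion of the umbrella Theorem~A. For the kernel I would prove two inclusions. For $\M\subseteq\ker M$: if $\vec{x}\in\M$ then $P^{t}\vec{x}$ is just $\vec{x}$ viewed in $\mathbb{C}^{n+p}$, so by the change-of-variables property of $\hat{T}$ the vector $\hat{T}^{-1}P^{t}\vec{x}$ lies in $\operatorname{span}\{\vec{e}_{j1},\ldots,\vec{e}_{jk}\}$; applying $Z$, the $(S^*)^k$ on the $j$th diagonal block annihilates that span, while the only other entries that can act — the $(S^*)^p$ on the $i$th diagonal block, the $(S^*)^p$ in $Z_{ij}$, and the identity in $Z_{ji}$ — send any residual mass onto the $p$ coordinates that were adjoined to the $i$th block, which are exactly the coordinates killed by $P$. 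Hence $M\vec{x}=PZ\hat{T}^{-1}P^{t}\vec{x}=\vec{0}$; doing this cleanly is where one must verify that the padding of the non-square off-diagonal blocks is arranged so that precisely the adjoined coordinates absorb the overflow and nothing spills back onto an original coordinate.

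For $\ker M\subseteq\M$ I would run a rank count. Theorem A1 gives $\operatorname{rank}\big(Z_0\hat{T}^{-1}\big)=n+p-\dim\M$, and the amendment to $Z$, together with the fact that $P$ and $P^{t}$ delete the $p$ adjoined rows and columns, is engineered so that $\operatorname{rank}M=n-\dim\M$; combined with the first inclusion, $\dim\ker M\le\dim\M\le\dim\ker M$ forces $\ker M=\M$. (If one prefers to avoid the count, the same conclusion follows by showing directly that for $\vec{x}\notin\M$ the vector $Z\hat{T}^{-1}P^{t}\vec{x}$ has a nonzero entry in some coordinate belonging to an original block, so that $P$ cannot kill it.)

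The step I expect to be the main obstacle is exactly this second inclusion: guaranteeing that the compression $X\mapsto PXP^{t}$ does not enlarge the kernel. This is the whole reason the off-diagonal blocks $Z_{ij}=(S^*)^p$ and $Z_{ji}=I$ appear in the amended $Z$ — they re-route any part of $Z_0\hat{T}^{-1}P^{t}\vec{x}$ that would otherwise have landed among the adjoined coordinates back onto coordinates of the original blocks, without disturbing the annihilation of $\M$. Making this trade-off rigorous means tracking the two backward shifts $(S^*)^k$ and $(S^*)^p$ through $\hat{T}^{-1}$ and through the padding of the non-square blocks, and then counting the surviving coordinates exactly; once that accounting is in place, the remaining verifications are routine bookkeeping with Lemma~\ref{commcond}, Theorem~\ref{single}, and Corollary~\ref{dsumsingle}.
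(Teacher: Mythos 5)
Your overall strategy matches the paper's: expand $J$ to $\hat{J}$ so that \M\ becomes marked, build $T$ in the expanded space as in Theorem~A1, amend $Z$, and compress by $P$ and $P^t$. The paper's own proof is similarly structured, though much terser.

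Two points of divergence are worth flagging. First, your description of the mechanism behind $\M\subseteq\ker M$ --- that the off-diagonal blocks ``re-route residual mass onto the adjoined coordinates,'' which $P$ then kills --- is not what actually happens. The padding conventions on $Z_{ij}=(S^*)^p$ (a zero column block on the left) are arranged so that the first $k$ columns of $Z_{ij}$ vanish outright; combined with the $(S^*)^k$ on the $j$th diagonal block, this means $Z\hat{T}^{-1}P^t\vec{x}=\vec{0}$ already for $\vec{x}\in\M$, before $P$ is even applied. (In Example~C's $\M_2$ one can check $Z\hat{T}^{-1}P^t\vec{v}=\vec{0}$ directly.) The compression by $P$ is not absorbing overflow for this inclusion; it is there to get back to $n\times n$ and to control the rank. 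So the accounting you propose to make rigorous --- tracking the $(S^*)^p$ shifts through the off-diagonal blocks and showing spill lands only on adjoined coordinates --- is aimed at the wrong target; the cleaner path is to verify $k\leq n_j-n_i$ (which follows from $p\geq 1$ by the way the expansion is sized, since $n_j-n_i=p+k-1$) and deduce that the first $k$ columns of $Z_{ij}$ and $Z_{jj}$ are zero.

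Second, your instinct that the reverse inclusion $\ker M\subseteq\M$ is the real obstacle is sound, and it is worth knowing that the paper's proof does not actually address it: the published argument shows only that $PZT^{-1}P^t$ annihilates \M. Your proposed rank count, $\operatorname{rank} M = n-\dim\M$, is the natural way to close the gap and does hold in the examples (in Example~C's $\M_2$, $\operatorname{rank}(PZT^{-1}P^t)=4=6-2$), but you have not proved it and it is not ``routine'' --- the amended off-diagonal blocks change the rank in a way that has to be counted carefully, and the compression $X\mapsto PXP^t$ can in principle lose rank. This is a genuine gap in your write-up, but one that the paper shares; filling it would actually strengthen the published argument.
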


\begin{proof}  $M=P Z  T^{-1}P^t$ has the two desired properties: 
$ \mbox{(i) } \M = \ker M \mbox{ and (ii) } P Z  T^{-1}P^t \in \{ J  \}^\prime$.
  
\noindent The first fact follows from $ T^{-1} T=I$, and so $T^{-1}$ acting on the array of columns that form $P^t$\M\ (and hence form certain columns of $T$) produce columns that have blocks of zeros except for the $j$th block being the $n_j \times n_j$ identity.  When then multiplied by $Z$, the first $k$ columns of this identity are sent to zero (by the appropriate power of the backward shift), and so $P Z T^{-1}P^t$ sends all entries in the array formed from the columns of vectors from \M\ to zero. The second fact follows because  $(S^*)^{p}$ applied to upper-triangular blocks -- this occurs in the matrix multiplication $Z T^{-1}$ --sends enough lower left elements of the $ji$ and $ij$ blocks of  $ZT^{-1}$  to zero  to insure $ZT^{-1} \in \{ \hat{J}\}^{\prime}$, and hence $ P Z  T^{-1}P^t \in \{ J  \}^\prime$.  
\end{proof}

Finally, consider the situation that \M\ is reducible; i.e., it can be written as a direct sum of more than one nonzero irreducible (chain) subspaces.  The construction easily modifies according to the irreducible subspaces in the direct sum, thinking of $T$ and $Z$ as being blocked in a corresponding manner to these pieces. Start with the irreducible piece that has the shortest chain and, if there is more than one of those  pieces with the same length, start with the one that has in its eigenvector's direct sum the elementary row eigenvector of the right-most block. Construct $T$ in this block's columns as before, then move to the next such shortest chain's corresponding block, and continue until each irreducible piece has the corresponding formation in $T$. Because the process starts with the shortest chain in the shortest block, there is always room at each step to fill in the chain's vectors as columns to form $T$.  After using all of the irreducible piece's chain vectors in the construction of $T$, fill out the rest of $T$ as before, with identities on the other diagonal pieces. Construct $Z$ in an analogous manner, using the previous construction in the corresponding portions of $Z$ for each irreducible piece. In these constructions of $Z$ and $T$, expand to $\hat{J}$ as before when any one of the irreducible pieces is nonmarked, using $P^t$ and $P$ to affect the expansion and shrinking from and to size $n \times n$ as before, and  if each irreducible piece of \M\ is marked, then $P$ is the identity. Theorem A then results for this case.  Example C will give a simple illustration for its subspace $\M_3$. 

\begin{theorem}{A3.}
{\it Given a reducible invariant subspace \M\ for a Jordan matrix $J$ with a single eigenvalue, construct matrices $T$, $Z$, and $P$ as above and in terms of each irreducible piece of \M\ to produce the $n \times n$ matrix $M$ with $ M=PZT^{-1}P^t \mbox{ and } \ \M\ = \ker M  $.}
\end{theorem}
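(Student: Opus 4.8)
The plan is to reduce everything to the irreducible cases already settled in Theorems~A1 and~A2, together with Corollary~\ref{dsumsingle}, by treating the direct summands of \M\ one at a time in the order prescribed just before the statement. Write $\M=\bigoplus_{r=1}^{s}\M^{(r)}$ as a direct sum of nonzero irreducible (chain) $J$-invariant subspaces, $\M^{(r)}=\mbox{span}\{\vec v_r,J\vec v_r,\ldots,J^{k_r-1}\vec v_r\}$ with $J^{k_r-1}\vec v_r$ an eigenvector, and relabel so that $k_1\le k_2\le\cdots\le k_s$ and, among summands of equal chain length, those whose eigenvector involves the rightmost elementary eigenvector $\vec e_{i1}$ come first. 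If some $\M^{(r)}$ is nonmarked, first replace $J$ by an expanded nilpotent Jordan matrix $\hat J$ on $\mathbb{C}^{n+p}$, obtained by enlarging Jordan blocks toward equal size, for which \M\ (regarded now inside the larger space) is marked; that such a $\hat J$ exists is exactly the content of the markedness criterion of \cite{Rodman}. If every $\M^{(r)}$ is already marked, set $\hat J=J$ and $p=0$.

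First I would build $T$ on the (possibly expanded) space. Running through $r=1,\ldots,s$ in the chosen order, for each $r$ I locate the rightmost Jordan block $J_j$ of $\hat J$ whose eigenvector $\vec e_{j1}$ occurs with nonzero coefficient in $J^{k_r-1}\vec v_r$ and place the chain $J^{k_r-1}\vec v_r,\ldots,J\vec v_r,\vec v_r$ into the first still-unused columns of the $j$th diagonal block of $T$, exactly as in Theorem~A1; afterwards I fill the remaining columns of each touched block by extending its nonzero entries in an upper-triangular Toeplitz way and put the identity on every untouched diagonal block. As in the proofs of Theorems~A1 and~A2, one then checks that $T$ is invertible, its columns being a basis of the ambient space, and that $T\in\{\hat J\}^\prime$ since every diagonal block of $T$ is upper-triangular Toeplitz. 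Thus $T^{-1}$ is a legitimate change of coordinates that puts \M\ (or, after expansion, its embedding $P^t\M$) into the elementary coordinate form handled by Corollary~\ref{dsumsingle} and, on any enlarged block, by Theorem~A2: the chains assigned to a given Jordan block occupy a contiguous initial run of elementary basis vectors there, so a single backward-shift power $(S^*)^{K}$, with $K$ the total chain length in that block, governs it.

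Next I would build $Z$ accordingly: block-diagonal with the identity on untouched blocks and $(S^*)^{K}$ on each touched block, plus, for every block enlarged in the passage $J\rightsquigarrow\hat J$, the two off-diagonal backward-shift blocks linking it to the block carrying the corresponding chain, exactly as in Theorem~A2 (one such pair per enlarged block). Then $Z\in\{\hat J\}^\prime$, hence $ZT^{-1}\in\{\hat J\}^\prime$ and $M:=PZT^{-1}P^t\in\{J\}^\prime$, where $P$ is the $n\times(n+p)$ row selection matrix deleting the rows and columns added in the expansion ($P=I$ and $M=ZT^{-1}$ when no expansion was needed). By Corollary~\ref{dsumsingle} block by block, together with the argument of Theorem~A2 on the enlarged blocks, $T^{-1}$ carries the embedded vectors $P^t\M$ onto spans of contiguous initial elementary basis vectors that $Z$ annihilates, so $\M\subseteq\ker M$; a dimension count giving $\mbox{rank}\,M=n-\dim\M$, run exactly as in the proofs of Theorems~A1 and~A2, then forces equality $\ker M=\M$.

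I expect the one real obstacle to be showing that this greedy, shortest-chain-first placement of chains into the columns of $T$ never gets stuck -- that after the expansion there is always room in the chosen block for the next chain, and that the resulting columns are linearly independent, so that $T$ is invertible and block-Toeplitz. This is precisely where markedness enters (the existence of a Jordan basis for \M\ that extends to one of the ambient space), and the point of the expansion $J\rightsquigarrow\hat J$, governed by the criterion of \cite{Rodman}, is exactly to guarantee that the amount of enlargement the procedure asks for always suffices. What remains is bookkeeping: tracking which irreducible pieces share a Jordan block (and hence merge into a single shift power) and iterating the Theorem~A2 off-diagonal construction once for each enlarged block.
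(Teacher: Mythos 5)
Your proposal takes essentially the same route as the paper's proof, which is in fact even terser: it just invokes Theorems A1 and A2 on each irreducible summand to get $\M_r\subseteq\ker M$, asserts that the direct sum yields $\M=\ker M$, and notes $M$ is in Lemma~1 form. The ``real obstacle'' you flag -- that the greedy, shortest-chain-first, rightmost-block-first filling of $T$'s columns never runs out of room -- is likewise handled only informally by the paper, in the sentence of the pre-theorem construction asserting this ordering guarantees space at each step, so your proposal is faithful to (and slightly more explicit than) the paper's own argument.
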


\begin{proof} 
Theorems A1 and A2 show the construction produces  $\M_n \subseteq \ker M$ for each irreducible piece $\M_n$ that forms the direct sum of \M. Taken together in the direct sum, this produces $\M=\ker M$. As before, the matrix $M$ is in the Lemma 1 form of matrices in $\{ J \}^\prime$. 
\end{proof} 

\begin{example}{C.} {\bf $J \sim (3-2-1)$}. The Jordan blocks are $3\times 3$, $2\times 2$, and $1 \times 1$. Consider the following illustrative subspaces, listed using general scalars  $A, B, C, D \in \mathbb{C}$:
\begin{itemize}
  \item  $\M_{1}=\mbox{span} \{ \vec{e}_{11}+A\vec{e}_{21}, \vec{e}_{12}+A\vec{e}_{22}+B\vec{e}_{11}+C\vec{e}_{21}\} $,  
\item  $\M_{2}= \mbox{span} \{ \vec{e}_{11}, \vec{e}_{12}+A\vec{e}_{11} + \vec{e}_{31} \}$, and 
\item $\M_{3}= \mbox{span} \{ \vec{e}_{11}+A\vec{e}_{21}, \vec{e}_{12}+A\vec{e}_{22}+B\vec{e}_{11}+C\vec{e}_{21} \} \bigoplus \mbox{span} \{  \vec{e}_{31}+D\vec{e}_{21} \} $.
\end{itemize}

\noindent $\bullet$ For $\M_{1}$, $T=$\tiny$\left[ \begin{smallmatrix} 1&0&0&|&1 & B  & | & 0 \\  0 & 1 & 0 & | & 0 & 1&|&0 \\ 0 & 0 & 1&| &0&0& | & 0  \\ -&-& -& - &-& -& - &- \\0& 0 &0 &| & A &C&|&0 \\ 0 & 0 & 0&| &0&A& | & 0  \\ -&-& -& - &-& -& - &- \\0& 0 &0 &| & 0 &0&|&1 \end{smallmatrix} \right]$\normalsize\ and   $Z=$\tiny$\left[ \begin{smallmatrix} (S^*)^0&| &  [0]&| & [0] \\-& -& -& - &- \\  [0] & | & (S^*)^2&| & [0]\\  -& -& -& - &- \\ [0] & | &  [0] &| & (S^*)^0 \end{smallmatrix} \right]=\left[ \begin{smallmatrix} 1 & 0 & 0&| &  0 & 0&| & 0 \\  0 & 1 & 0&| &  0 & 0&| & 0\\  0 & 0 &1& | &  0 & 0&| & 0\\-& -& -& - &- & -& - &- \\0& 0 &0&| &  0 & 0&| & 0 \\  0 & 0 &0& | &  0 & 0&| & 0\\-& -& -& - &- & -& - &- \\0& 0 &0&| &  0 & 0&| & 1 \end{smallmatrix} \right] $.\normalsize\  \\
\noindent $\M_{1}=\ker( ZT^{-1})= \ker$\tiny$ \left[ \begin{smallmatrix} 
1&  0&  0& |&  -1/A&  (C-AB )/A^2& |&  0\\ 0&  1&  0& |&  0&  -1/A& |&  0\\ 0&  0&  1& |&  0&  0& |&  0\\-& -& -& - &- & -& - &- \\ 0&  0&  0& |&  0&  0& |&  0\\ 0&  0&  0& |&  0&  0& |&  0\\-& -& -& - &- & -& - &- \\ 0&  0&  0& |&  0&  0& |&  1    \end{smallmatrix} \right]       $.\normalsize\ \\[1ex]

\noindent  $\bullet \ \M_{2}$ is irreducible but not marked. expand the 3rd Jordan block by one.\\ \noindent  $T=$\tiny$\left[ \begin{smallmatrix} 1&A&0&|&0 & 0  & | & 0 &0 \\  0 & 1 & A & | & 0 & 0&|&0 &0 \\ 0 & 0 & 1&| &0&0& | & 0 &0  \\ -&-& -& - &-& -& - &- &- \\0& 0 &0 &| & 1 &0&|&0 &0 \\ 0 & 0 & 0&| &0&1& | & 0 &0  \\ -&-& -& - &-& -& - &- &- \\0& 1 &0 &| & 0 &0&|&1 &0 \\0& 0 &1 &| & 0 &0&|&0 &1 \end{smallmatrix} \right]$,\normalsize\  
\noindent $Z=$\tiny$\left[ \begin{smallmatrix}  (S^*)^2 &| &  [0] \ &| &  I_2\\ &| & &| & [0] \\-& -& -& - &- \\  [0] & | & (S^*)^0&| & [0]\\  -& -& -& - &- \\ [0] \ (S^*)^1 & | &  [0] &| & (S^*)^1 \end{smallmatrix} \right]=\left[ \begin{smallmatrix} 0 & 0 & 1&| &  0 & 0&| & 1  & 0 \\  0 & 0 & 0&| &  0 & 0&| & 0  & 1 \\  0 & 0 &0& | &  0 & 0&| & 0  & 0 \\-& -& -& - &- & -& - &-  & -\\0& 0 &0&| &  1 & 0&| & 0  & 0  \\  0 & 0 &0& | &  0 & 1&| & 0  & 0 \\-& -& -& - &- & -& - &-  & - \\0& 0 &1&| &  0 & 0&| & 0  & 1  \\0& 0 &0&| &  0 & 0&| & 0  & 0 \end{smallmatrix} \right] $,\normalsize\ $P=$\tiny$\left[ \begin{smallmatrix} 1 & 0 & 0&| &  0 & 0&| & 0  & 0 \\  0 & 1 & 0&| &  0 & 0&| & 0  & 0 \\  0 & 0 &1& | &  0 & 0&| & 0  & 0 \\-& -& -& - &- & -& - &-  & -\\0& 0 &0&| &  1 & 0&| & 0  & 0  \\  0 & 0 &0& | &  0 & 1&| & 0  & 0 \\-& -& -& - &- & -& - &-  & - \\0& 0 &0&| &  0 & 0&| & 1  & 0   \end{smallmatrix} \right] $.\normalsize\ \\
\noindent $\M_{2}=\ker(P Z T^{-1}P^t)=\ker \left( P\right.$\tiny$ \left[ \begin{smallmatrix}   0&  -1&  1 +A &  0&  0&  1&  0\\ 0&  0&  -1&  0&  0&  0&  1\\ 0&  0&  0&  0&  0&  0&  0\\ 0&  
0&  0&  1&  0&  0&  0\\ 0&  0&  0&  0&  1&  0&  0\\ 0&  0&  0&  0&  0&  0&  1\\ 0&  0&  0&  0&  0&  0&  0  \end{smallmatrix} \right]$\normalsize$ P^t\left.\right) =\ker$\tiny$   \left[ \begin{smallmatrix}   0&  -1&  1 +A &|&  0&  0& |& 1 \\ 0&  0&  -1&|&  0&  0&|&  0 \\ 0&  0&  0&|&  0&  0&|&  0 \\-& -& -& - &- & -& - &-\\ 0& 0&  0&|&  1&  0&|&  0 \\ 0&  0&  0& |& 0&  1& |& 0\\-& -& -& - &- & -& - &- \\ 0&  0&  0& |& 0&  0& |& 0    \end{smallmatrix} \right].$\normalsize\  \\


\noindent  $\bullet \ \M_{3}$ is reducible. $T=$\tiny$\left[ \begin{smallmatrix} 1&0&0&|&1 & B  & | & 0 \\  0 & 1 & 0 & | & 0 & 1&|&0 \\ 0 & 0 & 1&| &0&0& | & 0  \\ -&-& -& - &-& -& - &- \\0& 0 &0 &| & A &C&|&D \\ 0 & 0 & 0&| &0&A& | & 0  \\ -&-& -& - &-& -& - &- \\0& 0 &0 &| & 0 &0&|&1 \end{smallmatrix} \right]$,\normalsize\  $Z=$\tiny$\left[ \begin{smallmatrix} (S^*)^0&| &  [0]&| & [0] \\-& -& -& - &- \\  [0] & | & (S^*)^2&| & [0]\\  -& -& -& - &- \\ [0] & | &  [0] &| & (S^*)^1 \end{smallmatrix} \right]=\left[ \begin{smallmatrix} 1 & 0 & 0&| &  0 & 0&| & 0 \\  0 & 1 & 0&| &  0 & 0&| & 0\\  0 & 0 &1& | &  0 & 0&| & 0\\-& -& -& - &- & -& - &- \\0& 0 &0&| &  0 & 0&| & 0 \\  0 & 0 &0& | &  0 & 0&| & 0\\-& -& -& - &- & -& - &- \\0& 0 &0&| &  0 & 0&| & 0 \end{smallmatrix} \right] $,\normalsize\ and $P$ is the identity.\ (Each piece is marked).\  
$\M_{3}=\ker( ZT^{-1})= \ker$\tiny$ \left[ \begin{smallmatrix} 
1&  0&  0& |&  -1/A&  (C-AB )/A^2& |&  D/A\\ 0&  1&  0& |&  0&  -1/A& |&  0\\ 0&  0&  1& |&  0&  0& |&  0\\-& -& -& - &- & -& - &- \\ 0&  0&  0& |&  0&  0& |&  0\\ 0&  0&  0& |&  0&  0& |&  0\\-& -& -& - &- & -& - &- \\ 0&  0&  0& |&  0&  0& |&  0   \end{smallmatrix} \right]       $.\normalsize\ \hfill $\square$
\end{example}

\vspace*{9pt}

\noindent {\bf $\S$ 3.} \textsc{The Construction Quickly Determines the Invariant Subspace Lattice}  \\[1ex]

\noindent Another way to handle the construction of $Z$ for subspaces that are not marked provides a remarkably simple way to formulate the complete lattice of $J$-invariant subspaces.  Any $J$-invariant subspace \M\ is also $\hat{J}$-invariant, where $\hat{J}$ is identical to $J$ except for individual Jordan blocks that may be expanded.  Hence any $J$-invariant non-marked \M\ can always be considered as $\hat{J}$-invariant marked for an expanded $\hat{J}$. We can easily construct $Z$ as in Theorem A1 for $\hat{J}$, without worrying what it is for $J$, and that construction identifies the subspace \M\ in the expanded structure.  This idea quickly produces the invariant subspace lattice for $J$ as in the following proof of Theorem B and illustrated in Example D.

\begin{proof}[Proof of Theorem B] Every $J$-invariant subspace \M\ can be considered marked -- if not for $J$ then for $\hat{J}$, where $\hat{J}$ is an expansion of $J$ upward so that the difference in the sizes between any Jordan block is at most 1. Then every $J$-invariant subspace corresponds in a pairwise fashion to a block-diagonal $Z$ as constructed to produce Theorem A1, either for $J$ or an expanded $\hat{J}$.\  The lattice of the matrices for $Z$ is then easy to construct:\ start with the $n \times n$ identity at the bottom of the lattice, which is thought of as the diagonal blocks of backward shifts taken to the 0{\it th} power.  Then simply raise any chosen diagonal block's backward shift power by one to get a new $Z$ at the next higher level in the lattice! The sum of the powers on the backward shifts equals the dimension of the subspace.\ Take care to include $Z$ for any subspace that is not marked; whenever the sizes of any two Jordan blocks differ by more than one, expand the dimension  of a smaller block up to any size $t$ that is one less than the larger block size to obtain a marked subspace as described above. The subspace becomes marked in the expanded space, and the expanded (now diagonal block) $Z$ fits properly into the lattice.
\end{proof}

\begin{example}{D.} {\bf $J \sim (3-1)$}; i.e. the Jordan blocks of $J$ are of size $3\times 3$ and $1 \times 1$. There are seven types of nontrivial invariant subspaces, listed here with general scalars  $A, B \in \mathbb{C}$.  The dimension 3 subspaces are \vspace{-20pt}

\begin{center} 
  \item  $\M_{31}=\mbox{span} \{ \vec{e}_{11}, \vec{e}_{12}+A\vec{e}_{11},\vec{e}_{13}+A\vec{e}_{12}+B\vec{e}_{11}\} $ and    $\M_{32}= \mbox{span} \{ \vec{e}_{11}, \vec{e}_{12}+A\vec{e}_{11} \} \bigoplus \mbox{span} \{  \vec{e}_{21} \} $.
\end{center}

\noindent The dimension 2 subspaces are
\begin{center} 
   $\M_{21}=\mbox{span} \{ \vec{e}_{11}, \vec{e}_{12} +A\vec{e}_{11}\} $,
    $\M_{22}=\mbox{span} \{ \vec{e}_{11}, \vec{e}_{12} +A\vec{e}_{11}+B \vec{e}_{21}\} $ (which is not marked), and  
$\M_{23}= \mbox{span} \{ \vec{e}_{11}, \vec{e}_{21} \} = \mbox{span} \{ \vec{e}_{11} \} \bigoplus \mbox{span} \{  \vec{e}_{21} \} $.
\end{center}

\noindent The dimension one subspaces are
\begin{center}
$\M_{11}= \mbox{span} \{ \vec{e}_{11} \}$, and
 $ \M_{12}= \mbox{span} \{ A\vec{e}_{11} +\vec{e}_{21}\}  $. 
\end{center}
Each subspace has been written as the direct sum of irreducible subspaces.   By the construction for Theorem A1, {\it where non-marked \M\ are considered in an expanded space as marked so that $Z$ is block diagonal}, the matrix $Z$ is as follows for each subspace.

\begin{multicols}{4}
\noindent $\left[ \begin{smallmatrix} 0 & 0 & 0&| & 0 \\  0 & 0 & 0&| & 0 \\  0 & 0 &0& | & 0 \\-& -& -& - &- \\0& 0 &0&| & 1 \end{smallmatrix} \right] $ for $\M_{31}$\vspace{3pt}

\noindent $\left[ \begin{smallmatrix} 0 & 0 & 1&| & 0 \\  0 & 0 & 0&| & 0 \\  0 & 0 &0& | & 0 \\-& -& -& - &- \\0& 0 &0&| & 0 \end{smallmatrix} \right] $ for $\M_{32}$\vspace{3pt}

\noindent $\left[ \begin{smallmatrix} 0 & 0 & 1&| & 0 \\  0 & 0 & 0&| & 0 \\  0 & 0 &0& | & 0 \\-& -& -& - &- \\0& 0 &0&| & 1 \end{smallmatrix} \right] $ for $\M_{21}$\vspace{3pt}

\noindent $\left[ \begin{smallmatrix} 0 & 0 & 1&| & 0 &0\\  0 & 0 & 0&| & 0&0 \\  0 & 0 &0& | & 0&0 \\-&-& -& -& - &- \\0& 0 &0&| & 1 &0 \\0& 0 &0&| & 0 &1 \end{smallmatrix} \right] $ \small for \normalsize $\M_{22}$\vspace{3pt}

\noindent $\left[ \begin{smallmatrix} 0 & 1 & 0&| & 0 \\  0 & 0 & 1&| & 0 \\  0 & 0 &0& | & 0 \\-& -& -& - &- \\0& 0 &0&| & 0 \end{smallmatrix} \right] $ for $\M_{23}$\vspace{3pt}

\noindent $\left[ \begin{smallmatrix} 0 & 1 & 0&| & 0 \\  0 & 0 & 1&| & 0 \\  0 & 0 &0& | & 0 \\-& -& -& - &- \\0& 0 &0&| & 1 \end{smallmatrix} \right] $ for $\M_{11}$\vspace{3pt}

\noindent $\left[ \begin{smallmatrix} 1 & 0 & 0&| & 0 \\  0 & 1 & 0&| & 0 \\  0 & 0 &1& | & 0 \\-& -& -& - &- \\0& 0 &0&| & 0 \end{smallmatrix} \right] $ for $\M_{12}$
\end{multicols}

\noindent As described in the proof of Theorem B, a lattice with these $Z$ matrices is simple to construct.
\begin{center} {I. Lattice of $Z$'s} \\
\tiny
$\xymatrix{ 
& & {\bf [0]}& &\\ & {\left[ \begin{smallmatrix} 0 & 0 & 0&| & 0 \\  0 & 0 & 0&| & 0 \\  0 & 0 &0& | & 0 \\-& -& -& - &- \\0& 0 &0&| & 1 \end{smallmatrix} \right] }  \ar[ur] &  & {{\boldsymbol{\left[ \begin{smallmatrix} 0 & 0 & 1&| & 0 \\  0 & 0 & 0&| & 0 \\  0 & 0 &0& | & 0 \\-& -& -& - &- \\0& 0 &0&| & 0 \end{smallmatrix} \right]}}} \ar@{=>}[ul] &\\
{{\boldsymbol{ \left[ \begin{smallmatrix} 0 & 0 & 1&| & 0 \\  0 & 0 & 0&| & 0 \\  0 & 0 &0& | & 0 \\-& -& -& - &- \\0& 0 &0&| & 1 \end{smallmatrix} \right]}}} \ar[ur] \ar@{=>}[urrr] &  & { \left[ \begin{smallmatrix} 0 & 0 & 1&| & 0 &0\\  0 & 0 & 0&| & 0&0 \\  0 & 0 &0& | & 0&0 \\-&-& -& -& - &- \\0& 0 &0&| & 1 &0 \\0& 0 &0&| & 0 &1 \end{smallmatrix} \right] }  \ar[ur]  &  & {{\boldsymbol{\left[ \begin{smallmatrix} 0 & 1 & 0&| & 0 \\  0 & 0 & 1&| & 0 \\  0 & 0 &0& | & 0 \\-& -& -& - &- \\0& 0 &0&| & 0 \end{smallmatrix} \right]}} } \ar@{=>}[ul] \\& {\boldsymbol{ \left[ \begin{smallmatrix} 0 & 1 & 0&| & 0 \\  0 & 0 & 1&| & 0 \\  0 & 0 &0& | & 0 \\-& -& -& - &- \\0& 0 &0&| & 1 \end{smallmatrix} \right] } }\ar@{=>}[ul]  \ar[ur]  \ar@{=>}[urrr]   & & {\left[ \begin{smallmatrix} 1 & 0 & 0&| & 0 \\  0 & 1 & 0&| & 0 \\  0 & 0 &1& | & 0 \\-& -& -& - &- \\0& 0 &0&| & 0 \end{smallmatrix} \right]}  \ar[ur] &\\
& & {\mathbf I_4}  \ar[ur] \ar@{=>}[ul]& &
 }$
\end{center}\normalsize 
The correspondence with the invariant subspace lattice  $\L(A)$ is easy to see.
\begin{center} 
 { II. $\L(A)$}\\
\fontsize{6}{6}
\hspace*{-35pt}
$\xymatrix{ 
&\mathbb{C}^4& \\ \M_{31}=\mbox{span} \{ \vec{e}_{11}, \vec{e}_{12}+A\vec{e}_{11},\vec{e}_{13}+A\vec{e}_{12}+B\vec{e}_{11}\}  \ar[ur] &  & \M_{32}= \mbox{span} \{ \vec{e}_{11}, \vec{e}_{12}+A\vec{e}_{11} \} \bigoplus \mbox{span} \{  \vec{e}_{21} \} \ar[ul]\\
\M_{21}=\mbox{span} \{ \vec{e}_{11}, \vec{e}_{12} +A\vec{e}_{11}\}  \ar[u] \ar[urr]   & \M_{22}=\mbox{span} \{ \vec{e}_{11}, \vec{e}_{12} +A\vec{e}_{11}+B \vec{e}_{21}\}  \ar[ur]    & \M_{23}=\mbox{span} \{ \vec{e}_{11}, \vec{e}_{21} \}  \ar[u] \\ \M_{11}=\mbox{span} \{ \vec{e}_{1 1}\}  
  \ar[u] \ar[ur]  \ar[urr]   & & \M_{12}=\mbox{span} \{ A\vec{e}_{11} +\vec{e}_{21}\} \ar[u] \\
& \{ \vec{0}\}  \ar[ur] \ar[ul]&
 }$
\end{center}

\normalsize 

The lattice of $Z$'s also quickly identifies the hyperinvariant subspaces -- those subspaces invariant not only for $J$ but also for all matrices in $\{J\}^\prime$.  They have been determined by Fillmore, et al in \cite[p. 128]{Fillmore} to  form a sublattice inside the full invariant subspace lattice.  Reformulating Fillmore's analysis to our $Z$ construction, any hyperinvariant subspace is one that is marked for $J$ and has corresponding $Z$ matrix whose individual diagonal blocks with sizes $n_1,\ldots, n_m$ (ordered from larger to smaller) have powers $r_1,\ldots,r_m$  on the backward shift that satisfy:\begin{itemize}[itemsep=-1mm]
\item  $r_1 \ge r_2 \ge \cdots \ge r_m \ge 0$, and
\item $n_1-r_1 \ge n_2-r_2 \ge \cdots \ge n_m-r_m \ge 0$.
\end{itemize}
Such $Z$ matrices, and hence the corresponding hyperinvariant subspaces, are simple to spot. They are in bold in the above lattice of $Z$'s, and the connecting lattice arrows are doubled to display the hyperinvariant subspace sublattice. 
\end{example}

\newpage
\noindent {\bf $\S$ 4.} \textsc{The Case for Any Jordan Form.}  \\[1ex]

\noindent The construction from Section 2 easily generalizes.

\begin{theorem}{A4.}  If $J$ is composed of $u$ (possibly more than one) Jordan blocks, say with a total of $m$ different eigenvalues $\lambda_i, i=1, \ldots , m$, then every $J$-invariant subspace \M\ is of the form \begin{center} $\M= \ker M= \ker(P Z  T^{-1}P^t) \equiv \ker  \left[ \begin{smallmatrix} P_1Z_1 T_1^{-1}P_{1}^t &  &  \vspace{-5pt}\\   & \ddots &   \\ & & P_m Z_m T_m^{-1} P_{m}^t \end{smallmatrix} \right]$\end{center} (blank portions of the display are filled with zeros), where each $P_i$, $Z_i$, and $T_i$ are constructed, one for each distinct eigenvalue, as in Theorem A3, and each $P$, $Z$, and $T^{-1}$ are the block diagonal matrices that each consists of its corresponding $m$ diagonal blocks indexed on $i$.  
\end{theorem}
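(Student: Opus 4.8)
The plan is to reduce Theorem A4 to the single-eigenvalue results of Theorem A3 by splitting \M\ along the generalized eigenspaces of $J$ and then reassembling the pieces block-diagonally. First I would group the Jordan blocks of $J$ by eigenvalue, writing $J=\mbox{diag}[J^{(1)},\ldots,J^{(m)}]$ where $J^{(i)}$ gathers all Jordan blocks with eigenvalue $\lambda_i$, and letting $E_i\subseteq\mathbb{C}^n$ be the corresponding coordinate subspace, i.e.\ the generalized eigenspace $\ker(J-\lambda_i I)^n$. Each spectral projection $\pi_i$ onto $E_i$ along $\bigoplus_{j\ne i}E_j$ is a polynomial in $J$ (partial fractions on the minimal polynomial), so any $J$-invariant subspace \M\ is automatically $\pi_i$-invariant; hence $\pi_i\M=\M\cap E_i=:\M_i$, and since $\sum_i\pi_i=I$ while the $E_i$ are independent, $\M=\bigoplus_{i=1}^m\M_i$ with each $\M_i$ a $J^{(i)}$-invariant subspace of $E_i$.

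Next I would apply Theorem A3 to each summand separately. Since $J^{(i)}$ has the single eigenvalue $\lambda_i$, and $J^{(i)}$ and $J^{(i)}-\lambda_i I$ share the same invariant subspaces, Theorem A3 produces matrices $P_i$, $Z_i$, $T_i$ — with $T_i$ a standard change of variables, $P_i$ a row-selection matrix, and $Z_i$ a diagonal or near-diagonal block matrix whose nonzero blocks are powers of a finite-dimensional backward shift — such that $M_i:=P_iZ_iT_i^{-1}P_i^t$ satisfies $\M_i=\ker M_i$ and $M_i\in\{J^{(i)}\}^\prime$. Any expansions needed for non-marked pieces of $\M_i$ take place inside $E_i$ and do not interact across eigenvalues.

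Then I would assemble $P=\mbox{diag}[P_1,\ldots,P_m]$, $Z=\mbox{diag}[Z_1,\ldots,Z_m]$, and $T=\mbox{diag}[T_1,\ldots,T_m]$, so that $T^{-1}=\mbox{diag}[T_1^{-1},\ldots,T_m^{-1}]$ and $M:=PZT^{-1}P^t=\mbox{diag}[M_1,\ldots,M_m]$, which is exactly the matrix displayed in the theorem. Because $M$ is block diagonal with respect to the $E_i$ partition, $\ker M=\bigoplus_i\ker M_i=\bigoplus_i\M_i=\M$, giving the equality $\M=\ker M$. For the commutant claim, $M$ has each diagonal block $M_i\in\{J^{(i)}\}^\prime$ and every cross-eigenvalue block equal to zero; this is precisely the form described in Lemma~\ref{commcond}, so $M\in\{J\}^\prime$, and the shape assertions on $T$, $P$, $Z$ are inherited block by block from those of $T_i$, $P_i$, $Z_i$.

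Almost all of this is bookkeeping; the single genuine ingredient is the primary decomposition $\M=\bigoplus_i(\M\cap E_i)$ of an arbitrary $J$-invariant subspace, which is what lets the multi-eigenvalue problem fall apart into the already-solved single-eigenvalue cases. The one point I would be careful about is checking that the global $P$ remains a legitimate row-selection matrix after the per-eigenvalue expansions: this holds because the expansions enlarge the $E_i$ independently, and $P$ simply deletes within each block exactly the rows and columns adjoined there, so no two blocks' expansions can collide.
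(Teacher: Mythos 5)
Your proposal takes essentially the same route as the paper: group Jordan blocks by eigenvalue, decompose $\M$ as a direct sum over the generalized eigenspaces, apply Theorem A3 per eigenvalue, and reassemble $P$, $Z$, $T$ block-diagonally, invoking Lemma~\ref{commcond} to verify $M\in\{J\}^\prime$ because cross-eigenvalue blocks vanish. The only difference is cosmetic: you justify the primary decomposition $\M=\bigoplus_i(\M\cap E_i)$ via spectral projections being polynomials in $J$, whereas the paper simply asserts it; both are sound.
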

 
\begin{proof}  Group together the Jordan blocks that share the same eigenvalue, making them adjacent in the Jordan form for $J$.  In this situation,  $\M=\M_1 \oplus \ldots \oplus \M_m $, where each $\M_i$ corresponds to a distinct one of the eigenvalues $\lambda_i$. The result follows immediately, applying the construction from Theorem A3 to each separate eigenvalue block and noting that the direct sum structure from different eigenvalue pieces fills the remaining portion of the matrix block structure of $M$ with zeros (a required condition for $Z$'s commutivity with $J$ implied by Lemma 1).  
\end{proof}

\begin{example}{E.} Examine $J=$\tiny$ \left[ \begin{smallmatrix} 2 & 1 & 0 & |&0 & \| &0\\ 0  & 2 & 1 & | & 0 & \| &0\\ 0 & 0 & 2 & |  & 0 & \| &0 \\ - & - & - & - &- & - & -  \\ 0 & 0 & 0 & | & 2 & \| &0 \\  = & = & = & = &= & = & = \\ 0 & 0 & 0 & |& 0& \| &3  \end{smallmatrix} \right]$,\normalsize\ the matrix in Example A.\ (Note blocks with common eigenvalue 2 are grouped together.) The subspace $\M=\mbox{span}\{\vec{e}_{11}, \vec{e}_{12}+A  \vec{e}_{21}+ B \vec{e}_{11} \}  \oplus  \mbox{span}\{  \vec{e}_{31} \}$,  where $A ,B    \in \mathbb{C}$, has non-marked first piece from the eigenvalue 2 and marked second piece from the eigenvalue 3. Then
\begin{center}  $T =$\tiny$  \left[ \begin{smallmatrix} 1   & B  & 0 & |& 0 &0 & \|  &0\\ 0  & 1   & B   & | & 0 &0  & \| &0\\ 0 & 0 & 1  & |  & 0 &0  & \| &0 \\ - & - & - & - &- & -&-&-  \\ 0 & A & 0  & | & 1  &0 & \| &0 \\ 0 & 0 & A  & | & 0  &1 & \| &0 \\ = & = & = & = &= & = & = &=\\ 0 & 0 & 0 & |& 0&0& \| &1  \end{smallmatrix} \right]$\normalsize,  $Z =$\tiny$  \left[ \begin{smallmatrix} (S^*)^2 & | & I_2 & \| & [0] \\ & | & [0] & \| &  \\ - & - & - & - & - \\ [0]\ (S^*)^1  & | & (S^*)^1 & \| & [0] \\ = & = & = & = & = \\ [0] & | & [0] & \| & (S^*)^1     \end{smallmatrix} \right]
   =  \left[ \begin{smallmatrix} 0   & 0  & 1 & |& 1 &0 & \|  &0 \\ 0  & 0   & 0   & | & 0 &1& \| &0 \\ 0 & 0 & 0  & |  & 0 &0& \| &0 \\ - & - & - & - &- &-  & -&-  \\ 0 & 0 & 1 & | & 0 & 1&\| &0  \\ 0 & 0 & 0  & | & 0&0 & \| &0  \\ =& = & = & = & = &= & = & = \\ 0 & 0 & 0 & |& 0& 0&\| &0  \end{smallmatrix} \right]$\normalsize,  $P=$\tiny$ \left[ \begin{smallmatrix} 1   & 0  & 0 & |& 0 &0 & \|  &0\\ 0  & 1   & 0   & | & 0 &0  & \| &0\\ 0 & 0 & 1  & |  & 0 &0  & \| &0 \\ - & - & - & - &- & -&-&-  \\ 0 & 0 & 0  & | & 1  &0 & \| &0  \\ = & = & = & = &= & = & = &=\\ 0 & 0 & 0 & |& 0&0& \| &1  \end{smallmatrix} \right]$\normalsize, and
  $\M = \ker ( P$\tiny$  \left[ \begin{smallmatrix} 0 & -A  & 1+AB & |&1  &0& \|  &0\\ 0  & 0 & -A  & | & 0 &1& \| &0\\ 0 & 0 & 0 & |  & 0&0& \| &0 \\ - & - & - & - &- & - & - &- \\ 0 & 0 & 1-A & | & 0 &1& \| &0 \\ 0 & 0 & 0 & | & 0 & 0 & \| &0 \\  = & = & = & = &= & = & = &=\\ 0 & 0 & 0 & |& 0&0& \| &0  \end{smallmatrix} \right]$\normalsize$P^t )=\ker $\tiny$\left[ \begin{smallmatrix} 0 & -A  & 1+AB & |&1   & \|  &0\\ 0  & 0 & -A  & | & 0  & \| &0\\ 0 & 0 & 0 & |  & 0 & \| &0 \\ - & - & - & - &-   & - &- \\ 0 & 0 & 1-A & | & 0  & \| &0 \\  = & = & = & = &=   & = &=\\ 0 & 0 & 0 & |& 0 & \| &0  \end{smallmatrix} \right]$.\normalsize\ \vspace*{-30pt}\end{center}    \hfill $\square$
\end{example}
\vspace*{15pt}

\noindent {\bf $\S$ 5.} \textsc{Concluding Remarks}\\[1ex]

\noindent This paper, as in the paper of Halmos~\cite{Halmos}, is set in \Cn\ for some positive integer $n$.    Our proofs depend on the Jordan Canonical Form and our results depend on the factorization of the characteristic polynomial of $A$ into linear factors, so these results are valid for real matrices whose characteristic polynomial has only real roots, but does not address similar questions about matrices whose characteristic polynomial has complex roots that are not real numbers.   

The construction in Theorem A4 is not the only method that works, nor is the matrix  $M$ with $\M= \ker M$ unique.  Other methods may be advantageous to use, for example, in computational programming settings or in tandem with basic linear algebra concepts such as solving a system of equations.  This last section provides an algorithm that lends itself to this scenario.  The different cases proceed similar to the above analysis, and again the heart of the algorithmic process (basically describing the full procedure) is for any irreducible invariant subspace (having a chain structure) for a nilpotent Jordan canonical form matrix $J$.  Given such a  subspace $\M= \{ \vec{v}, J\vec{v}, J^2\vec{v}, \ldots , J^{k-1}\vec{v} \}$, construct $M$ using the following algorithm: 

\begin{enumerate}[label=Step \arabic*:,itemsep=-1mm]
\item Form the $k \times n$ matrix $X$ whose rows  are the elements of \M\ ({\it thought of as row vectors} and written in terms of the given elementary basis) in reverse order of the list above. Note $\M = \mbox{RowSpace}(X)$, and so $\M^\perp = \mbox{ker }X$.
\item Row reduce $X$ and use each row to form the system of linear equations corresponding to the kernel elements.
\item Use these equations to form $M$, blocked and consistent with the format in Lemma 1,  so that $\mbox{RowSpace}(M)= \mbox{ker }X=\M^\perp$  and hence $\mbox{ker }M=\M$, in the following way: 
\begin{enumerate}[label=Step \Alph*:,itemsep=-1mm]
  \item Start with the first equation, which has the form $\sum c_{ij}x_{ij}=0$. If there are an even number of nonzero coefficients $c_{ij}$, then set the first half of the $x_{ij}$ terms equal to $1/c_{ij}$ and the last half equal to $-1/c_{ij}$.  (These terms collectively satisfy the equation.)  If there are an odd number, say $p>1$, of coefficients, set the first two of the $x_{ij}$ terms equal to $1/(2c_{ij})$, the next $(p-3)/2$ equal to $1/c_{ij}$ and the last $(p-1)/2$ equal to $-1/c_{ij}$.  (These terms collectively satisfy the equation.) Of course, if $p=1$, then $x_{ij}=0$.  For any case, enter each of the $x_{ij}$ values into the $\vec{e}_{ij}$ column position of the first row of $M$. 
  \item Since $M$ is block upper-triangular Toeplitz of the form in Lemma 1, many of its entries are automatically zero, and each item entered in Step A extends diagonally down its block.  Use these facts to fill in additional entries of $M$.
  \item Use each successive linear equation, substituting the values for variables already obtained, to determine additional entries in $M$ that correspond to any other variable $x_{ij}$ having a nonzero coefficient in at least one of the linear equations, using the same techniques as in Steps A and B.  (If substituted variable values are nonzero, then simple adjustments might need to be made for the $x_{ij}$ choices.  For example, if the equation becomes $\sum c_{ij}x_{ij}=C$, where $C$ is a nonzero constant, then set the first variable $x_{ij}$ equal to $C/c_{ij}$ and then apply the formulation from Step A to the remaining elements in the equation.)  
  \item Some variables $x_{ij}$ may not have any nonzero coefficient in any of the linear equations; they are nondeterministically   free.  Fill in their corresponding entries, in various rows of $M$, with 0's or 1's, in a way that correctly forms the rank of $M$.  Make sure $\mbox{Rank }M = n - \mbox{dim ker } M $. 
\end{enumerate}
\end{enumerate}

\begin{example}{F.} $J \sim (3-1)$. Suppose the Jordan blocks of $J$ are of size $3\times 3$ and $1 \times 1$. Let $\M=\mbox{ span } \{ \vec{e}_{11}, \vec{e}_{12}+\alpha \vec{e}_{11} + \beta \vec{e}_{21} \}$.  Then:

\begin{enumerate}[label=Step \arabic*:,itemsep=-1mm]
\item $X=\left[ \begin{smallmatrix} 1 & 0 & 0 & 0 \\ \alpha  &  1 & 0 & \beta   \end{smallmatrix} \right]$.

\item $\mbox{RowReduce}[X]= \left[ \begin{smallmatrix} 1 & 0 & 0 & 0 \\ 0 &  1 & 0 & \beta   \end{smallmatrix} \right]$. Hence \vspace{-8pt}
\[ \begin{array}{cccc}
1 x_{11}& &= & 0 \\
 &\ 1 x_{12}\ +\ \beta x_{21} & = & 0
\end{array}\]
\vspace*{-18pt}
\item Form $M$:
\begin{enumerate}[label=Step \Alph*:,itemsep=-1mm]
  \item Start with the first equation, which says $x_{11}=0$.  
  \item Using this value with additional facts in Step B, $M=$\tiny$\left[ \begin{smallmatrix} 0& \square & \square & | & \square \\ 0&0 & \square &| &0\\ 0 & 0 & 0 &  | &0  \\ -  &  - & - & - & -  \\ 0  &  0 & \square & | & \square   \end{smallmatrix} \right]$\normalsize, where the element in each square is not yet determined.
  \item The second equation $1  x_{12}\ +\ \beta  x_{21}  =  0$ determines $x_{12}=1$, and $x_{21}=-1/\beta$.  Using these values with additional facts in Step B, $M=$\tiny$\left[ \begin{smallmatrix} 0& 1 & \square & | & -1/\beta \\ 0&0 & 1 & |&0\\ 0 & 0 & 0 & |&  0  \\ -  &  - & - & - & -  \\ 0  &  0 & \square & |& \square   \end{smallmatrix} \right]$\normalsize.
  \item To obtain $\mbox{Rank }M = n - \mbox{dim ker } M =4-2=2$, the last two rows of $M$ are zero. Setting the last unknown entry in the first row equal to 0 (as $x_{13}$ is nondeterministically free),  \begin{center} $M=$\tiny$\left[ \begin{smallmatrix} 0& 1 & 0 & | &-1/\beta \\ 0&0 & 1 &| &0\\ 0 & 0 & 0 &  | &0  \\ -  &  - & - & -  & -  \\ 0  &  0 & 0 & | &0   \end{smallmatrix} \right]$\normalsize.\end{center} 
\end{enumerate}
\end{enumerate} \vspace*{-15pt} \hfill $\square$ \vspace*{18pt}
\end{example}

In~\cite{Halmos}, not only did Halmos prove that every invariant subspace is a ``commuting kernel,'' he also proved that every invariant subspace $\N$ for a given $n \times n$ matrix $A$ is a ``commuting range.'' That is, there is an $n \times n$ matrix $R$ with $R \in \{A\}^\prime$ such that $\N=Range(R)$. A quick application of Theorem A to $\N^{\perp}$, an invariant subspace of the adjoint $A^{*}$, yields the following parallel result.

\begin{cor} {\it For a given $n \times n$ matrix $A$ over $\mathbb{C}$ and an $A$-invariant subspace \N, there exists an $n \times n$ matrix $R=QP(T^{*})^{-1}Z^{*}P^tQ^{-1}$ over $\mathbb{C}$, where  $P^t$ denotes the transpose of $P$, $Q$ puts $A$ into Jordan form $J$, and: 
\begin{enumerate}  [topsep=1pt, label=\alph*) \ ]
\item $\N=Range\, R$;
\item $R \in \{A\}^\prime$;
\item $T$ and $P$ provide a standard change of variables and row selection matrix, respectively;
\item $Z$ is a diagonal or near-diagonal block matrix whose non-zero blocks are each a power of a finite-dimensional backward shift.\ 
\end{enumerate}
Furthermore, the subspace \N \ is marked if and only if the construction produces $Z$ and $T$ that are  $n \times n$ with $Z$ block-diagonal and $P$ the identity. } 
\end{cor}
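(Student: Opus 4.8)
The plan is to deduce this corollary from Theorem A (as realized in Theorem A4) by the standard adjoint/annihilator duality, exactly as the authors announce in the sentence preceding the statement. First I would recall the elementary fact that for an $n\times n$ matrix $A$ and a subspace $\N\subseteq\Cn$, the orthogonal complement $\N\prp$ is invariant for $A\st$ precisely when $\N$ is invariant for $A$; this is immediate from $\langle A\st x, y\rangle = \langle x, Ay\rangle$. So given an $A$-invariant $\N$, apply Theorem A4 to the matrix $A\st$ and its invariant subspace $\N\prp$: there is an $n\times n$ matrix $M'=PZT^{-1}P^t$ (with $P$, $Z$, $T$ as produced by the construction for $A\st$, and written in the Jordan basis of $A\st$) with $\N\prp = \ker(Q'M'Q'^{-1})$ where $Q'$ puts $A\st$ into Jordan form, and with $Q'M'Q'^{-1}\in\{A\st\}\prime$.

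Next I would transpose/conjugate everything back. The key adjoint identities are $(\ker B)\prp = \ran(B\st)$ and: $B$ commutes with $A\st$ if and only if $B\st$ commutes with $A$. Setting $N^{(\st)} := Q'M'Q'^{-1}$ and taking adjoints, $R := (N^{(\st)})\st$ commutes with $A$, and
\[
\ran R = \ran\big((N^{(\st)})\st\big) = (\ker N^{(\st)})\prp = (\N\prp)\prp = \N,
\]
giving (a) and (b). For the explicit form in the statement, one unwinds $R = (Q'M'Q'^{-1})\st = (Q'^{-1})\st (M')\st (Q')\st = (Q'\st)^{-1}(PZT^{-1}P^t)\st Q'\st = (Q'\st)^{-1} P (T\st)^{-1} Z\st P^t Q'\st$, using $(P^t)\st = P$ and $(T^{-1})\st = (T\st)^{-1}$ for the real row-selection matrix $P$ and recording that $Q\st$ is the matrix putting $A$ into the relevant conjugate-Jordan form; after relabeling $Q'\st$ as $Q$ one gets exactly $R = QP(T\st)^{-1}Z\st P^t Q^{-1}$ as claimed, and $Z\st$ is again a diagonal or near-diagonal block matrix whose nonzero blocks are powers of a finite-dimensional shift (the adjoint of a power of the backward shift is a power of the forward shift, which up to the standard reversal-of-basis is again of the advertised form), giving (c) and (d).

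Finally, the marked/unmarked dichotomy transfers because the Jordan structure of $A\st$ is the same as that of $A$ (same block sizes for each eigenvalue, with $\lambda$ replaced by $\bar\lambda$), and $\N$ is marked for $A$ if and only if $\N\prp$ is ``co-marked'' for $A\st$ in the sense that makes the Theorem A4 construction for $\N\prp$ produce $P$ the identity and $Z$ block-diagonal; here I would invoke the criterion from \cite{Rodman} quoted in the introduction, namely that every invariant subspace is marked exactly when for each eigenvalue the largest and smallest block sizes differ by at most $1$ — a condition manifestly symmetric under $A\mapsto A\st$ — together with the characterization in Theorem A that markedness of the given subspace is equivalent to the construction yielding $n\times n$ block-diagonal $Z$, $n\times n$ $T$, and $P=I$.

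\textbf{Anticipated obstacle.} The genuinely delicate step is the bookkeeping in the second paragraph: verifying that the adjoint of the Theorem A4 output, expressed in the conjugate-Jordan basis, is literally of the form $QP(T\st)^{-1}Z\st P^t Q^{-1}$ with $Z\st$ still a ``power of a finite-dimensional backward shift'' block matrix. This requires reconciling the built-in reversal of orientation — an invariant subspace $\mathrm{span}\{\vec e_1,\dots,\vec e_k\}$ for $J$ corresponds under $\prp$ to a subspace spanned by the \emph{last} coordinates, and backward shifts turn into forward shifts under adjoint — with the paper's conventions. I expect this amounts to conjugating by the order-reversing permutation on each Jordan block and checking it intertwines everything correctly; once that normalization is fixed the identification is routine, but it is the place where a careless argument would go wrong.
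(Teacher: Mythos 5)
Your proposal is correct and follows essentially the same route as the paper: apply Theorem A to $\N\prp$ as an invariant subspace of $A\st$, then take adjoints using $(\ker M)\prp = \ran(M\st)$ and the commutant duality to get $R = QP(T\st)^{-1}Z\st P^t Q^{-1}$. The bookkeeping you flag as the "anticipated obstacle" (the conjugate-Jordan basis for $A\st$ and the backward-to-forward shift reversal under adjoints) is real but is passed over silently in the paper's own one-line proof, so your treatment is if anything more careful than the original.
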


\begin{proof} Apply Theorem A to $\N^{\perp}$, an invariant subspace for  $A^{*}$, to produce 
$M=PZT^{-1}P^t \in \{J^{*}\}^\prime$ with $\M^{\perp}=\ker M$, which  translates to
$R=QM^{*}Q^{-1}=Q(PZT^{-1}P^t)^*Q^{-1}=QP(T^{*})^{-1}Z^{*}P^tQ^{-1}$, with  $R\in \{A\}^\prime$ and $\N=Range\, R$.
\end{proof}

Finally, we make note of a similar investigation, one that is parallel to and concerns a generalization of the discussion in this paper.  To that end, let $N$ be a linear transformation on \Cn\ and let \N\ be a hyperinvariant subspace of dimension $k<n$ for $N$.   As previously mentioned, for any complex number $\alpha$,  the transformations $N$ and $N+\alpha I$ have the same invariant subspaces.  Moreover, $N$ commutes with a transformation $A$ if and only if $A$ commutes with $N+\alpha I$, so $N$ and $N+\alpha I$ have the same hyperinvariant subspaces.\  The investigation starts with the following observation, a parallel extension of the fact -- the very first one mentioned in this paper -- that $\ker N$ is $A$-invariant when $N$ is in the commutant of $A$:

\begin{thm}\label{translates}  Let $N$ and $A$ be linear transformations on \Cn.  If $N$ commutes with  $A$, then for each positive integer $k$ and each $\alpha$ in \Co,  the nullspace of $(N-\alpha I)^k$ is an $A$-invariant subspace.
\end{thm}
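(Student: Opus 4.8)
The plan is to reduce the statement to the very first fact mentioned in the paper, namely that $\ker B$ is $A$-invariant whenever $B$ commutes with $A$. The key observation is that if $N$ commutes with $A$, then so does every polynomial in $N$, and in particular so does $(N-\alpha I)^k$ for each $\alpha \in \Co$ and each positive integer $k$. Indeed, $A(N-\alpha I) = AN - \alpha A = NA - \alpha A = (N-\alpha I)A$, so $A$ commutes with $N - \alpha I$; iterating (or invoking that the commutant of $A$ is an algebra) gives that $A$ commutes with $(N-\alpha I)^k$.

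From there the argument is immediate: setting $B = (N-\alpha I)^k$, we have $B \in \{A\}^\prime$, and the opening observation of the paper (or a two-line direct check) shows $\ker B$ is $A$-invariant. Concretely, if $\vec{x} \in \ker B$ then $B(A\vec{x}) = A(B\vec{x}) = A\vec{0} = \vec{0}$, so $A\vec{x} \in \ker B$. Since the nullspace of $(N-\alpha I)^k$ is exactly $\ker B$, it is $A$-invariant, which is the claim.

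There is no real obstacle here; the statement is essentially a warm-up observation setting the stage for the parallel investigation alluded to in the concluding remarks. The only thing worth stating carefully is the commutation step — that passing from $N$ to $N - \alpha I$ and then to the $k$th power preserves membership in $\{A\}^\prime$ — since the rest is a verbatim repeat of the elementary fact invoked at the very beginning of the paper. I would write the proof in three short sentences: commutation with $N-\alpha I$, commutation with its $k$th power, and the one-line kernel-invariance check.
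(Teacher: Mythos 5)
Your proposal is correct and follows the same route as the paper's proof: pass from commutation with $N$ to commutation with $(N-\alpha I)^k$, then apply the one-line kernel-invariance check $B(A\vec{x})=A(B\vec{x})=\vec{0}$. You in fact justify the commutation step slightly more explicitly than the paper does, which simply asserts it.
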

\begin{proof}
Let $A$ be a linear transformation that commutes with the transformation $N$, let 
$\alpha$ be a complex number, let $k$ be a positive integer and let $\vec{v} \in \ker (N-\alpha I)^k$.   Since $A$ commutes with $N$ implies $A$ commutes with $(N-\alpha I)^k$, 
\[ (N-\alpha I)^k (A\vec{v}) = A (N-\alpha I)^k \vec{v}=A\vec{0}=\vec{0} \]
and $A\vec{v} \in \ker (N-\alpha I)^k$.   Since this is true for each $\vec{v}$ in $\ker (N-\alpha I)^k$, this  means  the nullspace of $(N-\alpha I)^k$ is an invariant subspace for $A$.
\end{proof}

On the other hand, the following example shows that the converse is {\bf not} true.

\begin{example}{G.} Let $N$ be the $3 - 2$ nilpotent transformation on $\Co^5$ that has matrix with respect to the usual elementary basis: $ N\sim $\tiny$\left[\begin{smallmatrix} 0 &1& 0 &0 &0\\ 0 &0& 1 &0 &0\\0 &0& 0 &0 &0\\0 &0& 0 &0 &1\\ 0 &0& 0 &0 &0 \end{smallmatrix}\right]$\normalsize. In the same basis, let $A$ be the transformation with matrix $A\sim $\tiny$\left[\begin{smallmatrix} 1&3& 5 &0 &0\\ 0 &1& 7 &0 &0\\0 &0& 1 &0 &0\\0 &0& 0 &2 &11\\ 0 &0& 0 &0 &2 \end{smallmatrix}\right] $\normalsize.
It is easy to check that the nullspace of $N$ is spanned by $\{ \vec{e_1}, \vec{e_4}\}$, the nullspace of $N^2$ is spanned by $\{\vec{e_1}, \vec{e_2}, \vec{e_4}, \vec{e_5}\}$, the nullspace of $N^k$ is $\Co^5$ for all $k\geq 3$, and that all of these subspaces are invariant subspaces for $A$.  On the other hand, $(NA-AN)\vec{e_3} = 4\vec{e_1}\neq 0$; hence $A$ and $N$ do not commute! \hfill $\square$ 
\end{example}  

\bigskip
In the last example, the invariant subspace lattice for $A$ is contained in the invariant subspace lattice for $N$. But then $A$ is not reflexive in the sense of \cite{Deddens}, since the failure of $N$ to commute with $A$ cannot allow $N$ to be a polynomial in $A$. Also, a reasonable interpretation of this last result is that Halmos' 1971 theorem is, in this investigative vein, the most general result possible, and (in this sense) this paper's construction fully characterizes the relationship between a finite linear transformation $A$ on $\mathbb{C}^n$, its invariant subspaces, and the elements in the commutant $\{ A \}^\prime$.




  
\end{document}